\newtheorem{theorem}{Theorem}
\newtheorem{corollary}[theorem]{Corollary}
\newtheorem{lemma}[theorem]{Lemma}
\newtheorem{remark}[theorem]{Remark}
\title{\textbf{A Method to Estimate a Neighborhood of a Periodic Orbit}}
\author
{
\textbf{Mario Cavani} \\ \\
\small{Universidad de Oriente, Escuela de Ciencias, Cuman\'a, Venezuela.}\\
\small{\texttt{mcavani@udo.edu.ve}}}
\begin{document}
\maketitle
\begin{abstract}
In this paper we describe a method to estimate a neighborhood containing a periodic orbit of a given system of two ordinary differential equations. By using the theory of integral averages, the system of differential equations can be transformed into an equivalent autonomous system which, by using the Hopf bifurcation theorem, the existence of a periodic solution of this autonomous nonlinear differential equations can be demonstrated. Using of this procedure it is possible to estimate an annular region where the orbit of the periodic solution is located. The method allows to improve the results that the Hopf Bifurcation provides on periodic solutions. In addition, some quantitative characteristics of the solution can be known, such as the amplitude, the period and, a region where the periodic orbit of the original system is located. The method is applied to a three-dimensional system of differential equations that models the competition of two predators and one prey, which under the assumption that the predators are equally voracious, property that in this case leads to a two-dimensional system, where all of the conditions of the method described here are easily applicable.
\end{abstract}
\pagestyle{myheadings} \markboth{\centerline {\scriptsize M. Cavani
}}{\centerline {\scriptsize  A method to Estimate a Neighborhood of a Periodic Orbit}}

\noindent{\textbf {Keywords}}: Nonlinear differential equations, Periodic solutions,
Hopf bifurcation, Integral averages

\section{Introduction}

This article deals with the existence of periodic solutions of nonlinear
ordinary differential equations, specifically it has to do with the periodic
solutions of autonomous systems (and can also be applied to periodic systems
concerning time, t) of nonlinear differential equations, which depend
on a parameter. This topic has occupied the attention of many mathematicians
since Poincar\'{e}, and in recent decades many advances have been made, but
its complete solution still seems to be a long way off. Many results,
generally difficult to obtain, apply to a very particular family of
equations (see \cite{CavaniAkhterAnote}), and in certain cases, to a single equation.
Among the results that have a certain degree of generality is the obtaining of periodic
solutions through the Hopf Bifurcation Theorem, and one of the objectives of the
present work is to describe this procedure in the case of two-dimensional
equations. Hopf's Bifurcation Theorem allows, in many cases, to solve the
the problem of the existence of periodic solutions in systems of differential
equations that depend on a parameter, but in practice, it may be necessary to
quantitatively know some characteristics of the solution, such as the
amplitude, the period or a region where it is located. In this direction,
there is the method of integral averages, which is also described in this
work in a general way, and is subsequently applied to problems related to
the Hopf Bifurcation. The study carried out in this article contributes to
the dissemination of a method that allows obtaining, approximately, the
periodic solutions of autonomous systems of ordinary differential equations
that depends on a parameter. The application of the methods presented here
poses some difficulty in calculations, because some steps depend on a
linearization, which is assumed to take the Jordan canonical form. The main
objective of this work is to show the step by step method to obtain approximately,
the amplitude and stability of a periodic solution obtained through the bifurcation of
a critical point of an autonomous equation.
\newline Here, it is analyzed a three-dimensional  system of differential equations that
models the competition of two predators and a prey that was proposed by
Hsu, Hubbell, and Waltman in \cite{HHW}. The results here,
improve the results of M. Farkas in \cite{MFarkas} because  gives an approximation
of the amplitude of the periodic solution, as well as an estimate of a annular region
that contains the orbit of the periodic solution.

\section{Preliminaries}
Let $a$ and $\alpha_0 \ge 0$ be real numbers, and consider the $n$-dimensional differential
equation
\begin{equation}
x^{\prime }=F(t,x,\alpha )  \label{eq1}
\end{equation}
here, $F:[a,+\infty )\times \mathbb{R} ^{n}\times [-\alpha _{0},\alpha
_{0}]\longrightarrow \mathbb{R} ^{n}$, is continuously differentiable with
respect to each one of its variables and is $T$-periodic in $t$, $T>0$.
\subsection{Periodic Solutions}
The following result characterizes the $T$-periodic solutions of system (\ref{eq1}).
\begin{lemma}
\label{lem1} A necessary and sufficient condition for a solution $x(t,\alpha
)$ of the equation (\ref{eq1}) to have period $T$ is,
\begin{equation}
x(T,\alpha )-x(0,\alpha )=0.  \label{cond}
\end{equation}
\end{lemma}
\begin{proof}
The condition is necessary. To see sufficiency, suppose that
$x(t,\alpha )$  is a solution of (\ref{eq1}) that satisfies (\ref{cond}). The
function,  $y(t)$, defined by $y(t)=x(t+T,\alpha )$ satisfy,
\begin{equation*}
y^{\prime }(t)=x^{\prime }(t+T,e)=F(t+T,x(t+T,\alpha ),\alpha
)=F(t,y(t),\alpha )\text{,}
\end{equation*}
and, $y(0)=x(T,\alpha )=x(0,\alpha )$.   From the Theorem of Existence and
the uniqueness of the solutions of a system of first-order differential
equations (see \cite{CodLevin} , page 1) is obtained that
$x(t+T,\alpha )=y(t)=x(t,\alpha ).$
\end{proof}
\begin{theorem}
\label{teo 2.2} Suppose that for $\alpha =0$, the system (\ref{eq1}) has a $%
T $-periodic solution, $p(t)$, such that the variational equation%
\begin{equation}
y^{\prime }=D_{x}F(t,p(t),0)y  \label{variaciones}
\end{equation}%
respect to this solution it has no $T$-periodic solutions, except the
trivial solution $y(t)=0$. Then for each value of \ $\alpha ,$ $|\alpha |$
small enough, the system (\ref{eq1}) has a unique $T$-periodic solution,
continuous in $(t,\alpha )$ that satisfies%
\begin{equation}
\lim_{\alpha \longrightarrow 0}x(t,\alpha )=p(t),  \label{lim-0}
\end{equation}%
Uniformly in the variable $t$.
\end{theorem}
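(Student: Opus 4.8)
The plan is to prove this as a perturbation result about the $T$-periodic solution $p(t)$, using the implicit function theorem together with the characterization from Lemma~\ref{lem1}. Let $x(t,\xi,\alpha)$ denote the solution of \eqref{eq1} with initial condition $x(0)=\xi$; this solution is continuously differentiable in $(t,\xi,\alpha)$ by smooth dependence on initial data and parameters. By Lemma~\ref{lem1}, a $T$-periodic solution corresponds exactly to a zero of the displacement map
\begin{equation*}
G(\xi,\alpha)=x(T,\xi,\alpha)-\xi .
\end{equation*}
At $\alpha=0$ we have the known periodic orbit $p(t)$, so if we set $\xi_0=p(0)$ then $G(\xi_0,0)=0$. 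The goal is to solve $G(\xi,\alpha)=0$ for $\xi=\xi(\alpha)$ near $\xi_0$ for small $|\alpha|$, and then verify that the resulting solution is genuinely $T$-periodic and converges to $p(t)$.

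First I would compute the partial derivative $D_\xi G(\xi_0,0)=\Phi(T)-I$, where $\Phi(t)=D_\xi x(t,\xi_0,0)$ is the fundamental solution of the variational equation \eqref{variaciones} with $\Phi(0)=I$. Indeed, differentiating \eqref{eq1} with respect to the initial condition shows that $\Phi(t)$ satisfies $\Phi'=D_xF(t,p(t),0)\Phi$, which is precisely \eqref{variaciones}. The crux of the argument is that the hypothesis---namely that \eqref{variaciones} has no nontrivial $T$-periodic solution---is equivalent to the invertibility of $\Phi(T)-I$. This is the standard Floquet-theoretic fact: a nontrivial $T$-periodic solution of \eqref{variaciones} exists if and only if the monodromy matrix $\Phi(T)$ has eigenvalue $1$, i.e. if and only if $\det(\Phi(T)-I)=0$. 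So the hypothesis guarantees exactly that $D_\xi G(\xi_0,0)$ is nonsingular.

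With $G$ of class $C^1$, $G(\xi_0,0)=0$, and $D_\xi G(\xi_0,0)$ invertible, the Implicit Function Theorem yields a $C^1$ function $\alpha\mapsto\xi(\alpha)$ defined for $|\alpha|$ small, with $\xi(0)=\xi_0$ and $G(\xi(\alpha),\alpha)=0$, and this $\xi(\alpha)$ is locally the unique zero of $G$ near $\xi_0$. Setting $x(t,\alpha)=x(t,\xi(\alpha),\alpha)$ gives, by Lemma~\ref{lem1}, a $T$-periodic solution of \eqref{eq1}; continuity in $(t,\alpha)$ follows from joint continuity of the flow composed with the continuous $\xi(\alpha)$. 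The local uniqueness in the Implicit Function Theorem delivers the uniqueness clause of the statement, at least among solutions whose initial value stays near $\xi_0$.

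The last step is the uniform limit \eqref{lim-0}. Since $\xi(\alpha)\to\xi_0$ as $\alpha\to0$ and the flow $x(t,\xi,\alpha)$ is jointly continuous, hence uniformly continuous on the compact set $[0,T]\times\{|\xi-\xi_0|\le\delta\}\times[-\alpha_0,\alpha_0]$, we get $x(t,\alpha)\to x(t,\xi_0,0)=p(t)$ uniformly for $t\in[0,T]$; periodicity then extends this to all $t$. The main obstacle I anticipate is establishing the Floquet equivalence cleanly: one must argue carefully that ``no nontrivial $T$-periodic solution of the variational equation'' is the same condition as ``$1$ is not a Floquet multiplier,'' which in turn is the same as invertibility of $\Phi(T)-I$. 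All three are standard but require spelling out the correspondence between $T$-periodic solutions $y(t)$ and eigenvectors of $\Phi(T)$ for the eigenvalue $1$ via $y(0)=\Phi(T)y(0)$.
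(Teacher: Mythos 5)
Your proposal is correct and follows essentially the same route as the paper's own proof: both reduce the problem via Lemma~\ref{lem1} to solving a displacement equation (your $G(\xi,\alpha)=x(T,\xi,\alpha)-\xi=0$ is the paper's $\beta(T,c,\alpha)-p(0)-c=0$ with $\xi=p(0)+c$), identify the Jacobian with the monodromy matrix of \eqref{variaciones} minus the identity, invoke the Floquet-theoretic equivalence between absence of nontrivial $T$-periodic solutions and $1$ not being a characteristic multiplier, and conclude by the implicit function theorem plus continuous dependence for the uniform limit. The only difference is cosmetic (you parametrize by the initial point $\xi$ rather than the offset $c$, and you spell out the multiplier correspondence and the uniformity argument in slightly more detail than the paper does).
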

\begin{proof}
Consider the solution of (\ref{eq1}) that for $t=0$ has the value $%
p(0)+c$, $\Vert c\Vert $ small enough, and denote this solution by%
\begin{equation*}
\beta =\beta (t,c,\alpha )
\end{equation*}
According to lemma (\ref{lem1}), the solution  $\beta $ will be $T$-periodic
if it holds that
\begin{equation}
\beta (T,c,\alpha )-p(0)-c=0.  \label{cond1}
\end{equation}
Therefore, the problem is reduced to finding pairs $(\alpha,c)$ of the
expression (\ref{cond}). To do this, the implicit function theorem is used.
Since for $\alpha =0$, we have $c=0$ as the solution of (\ref{cond1}),
therefore, if for this pair of values the Jacobian of the left part of (\ref{cond1})
is not zero, so for $|\alpha |$ small the relation (\ref{cond1})
has a unique solution $c=c(\alpha )$, such that,
\begin{equation*}
\lim_{\alpha \longrightarrow 0}c(\alpha )=0.
\end{equation*}
In this way, using the values obtained from the implicit function theorem,
the only $T$-periodic solution
\begin{equation*}
x(t,\alpha )=\beta (t,c(\alpha ),\alpha )
\end{equation*}
is obtained for the equation (\ref{eq1}). To determine the Jacobian in
question, the derivatives are taken for the components of $c$
and it is given by the matrix
\begin{equation}
D_{c}\beta (T,0,0)-I  \label{2.6}
\end{equation}
where $I$ is the identity matrix. This Jacobian is closely related to the
variational equation (\ref{variaciones}).
\begin{equation*}
\beta ^{\prime }(t,c,\alpha )=F(t,\beta (t,c,\alpha ),\alpha ),
\end{equation*}
and the derivative of the previous expression for $c$, and putting $\alpha =0$, $c=0$,  gives
\begin{equation*}
D_{c}\beta ^{\prime }(t,0,0)=D_{\alpha }F(t,\beta (t,0,0),0)D_{c}\beta
(t,0,0).
\end{equation*}
As $\beta (t,0,0)=p(t)$, then
\begin{equation*}
D_{c}\beta ^{\prime }(t,0,0)=D_{x}F(t,p(t),0)D_{c}\beta (t,0,0).
\end{equation*}%
From the hypotheses about $F$, the change in the order of derivation with
respect to $t$ and $c$, which is permissible, so it is concluded that the
matrix is given by
\begin{equation}
A(t)=D_{c}\beta (t,0,0)  \label{2.7}
\end{equation}%
is a matrix solution for the variational equation (\ref{variaciones}).
Now taking into account that
\begin{equation*}
\beta (0,c,\alpha )=p(0)+c,
\end{equation*}%
the derivative respect to $c$ in the previous expression gives
\begin{equation*}
D_{c}\beta (0,c,\alpha )=I.
\end{equation*}%
The characteristic multipliers are the eigenvalues of the matrix $A(T)$ given by
(\ref{2.7}), therefore the characteristic multipliers associated with
the system (\ref{variaciones}) are roots of the equation%
\begin{equation}
det(A(T)-\lambda I)=0  \label{2.8}
\end{equation}%
From the hypotheses about the system (\ref{variaciones}) and from the
general theory about systems of linear differential equations with periodic
coefficients, it is known that a system of this type has a non-trivial $T$%
-periodic solution, if and only if, $\lambda =1$ is a multiplier associated
to the system. In this case, $\lambda =1$ cannot be a multiplier since there
are no $T$-periodic solutions other than the null solution $y=0$ for all $t$. 
Which leads to the conclusion that, for $\lambda =1$ the expression (\ref%
{2.8}) is not zero. This expression coincides with the determinant of the
matrix given in (\ref{2.6}), which implies the existence of $T$-periodic
solutions for small $\Vert \alpha \Vert $. The limit stated in (\ref{lim-0})
is a direct consequence of the continuous dependence of the solutions for parameters.
\end{proof}

\begin{remark}
It should be noted that in the system (\ref{variaciones}) the condition of
not having another $T$-periodic solution other than the null solution is
equivalent to the same system not presenting characteristic exponents
integer multiples of $2\pi i/T$ (see \cite{CodLevin}, page 80). Therefore,
under this condition the conclusions of Theorem \ref{teo 2.2} remain valid.
\end{remark}

\begin{remark}
\label{obs 2.3} Theorem \ref{teo 2.2} is clearly applicable to the case in
which the right part of the equation (\ref{eq1}) is a small perturbation of
a linear system, that is, $F(t,x,\alpha )=A(t)x+\alpha f(t,x,\alpha )$,
where $f(t,x,\alpha )$ is of order $O(\Vert x\Vert ^{2})$, $\alpha $ is
small and the equation $y^{\prime }=A(t)y$ has an isolated periodic solution.
\end{remark}

Using the Theorem \ref{teo 2.2}, the study of the stability of the
periodic solutions of the equation (\ref{eq1}) can be realized. Let be,
\begin{equation}
x=z+p(t)  \label{2.9}
\end{equation}%
with, $x=x(t,\alpha )$, $z=z(t,\alpha )$, and $p(t)$ the $T$-periodic
solution of the equation (\ref{eq1}) obtained in the Theorem \ref{teo 2.2},
then the equation (\ref{eq1}) can be rewritten as
\begin{equation}
z^{\prime }=D_{x}F(t,p(t),0)z+f(t,z,\alpha ),  \label{2.10}
\end{equation}%
where, $f(t,z,\alpha )=O(\Vert z\Vert ),$ $\Vert z\Vert $ small enough,
uniformly for $t\in \lbrack a,a+T],\quad \alpha \in \left[ -\alpha
_{0},\alpha_{0}\right] .$ From the equation (\ref{2.10}) it can be seen
that as an immediate consequence of Perron's Theorem (see \cite{CodLevin},
Chapter 13, Theorem 1.1 and Theorem 1.4), using the change of variables
established in the well-known Floquet Theory (see also \cite{Hale80}, page
118) that allows passing from a differential equation of periodic
coefficients to one of the constant coefficients, it can be established that the
asymptotic stability of the solution $p(t)$ follows from the asymptotic
stability of the zero solution of the system

\begin{equation}
y^{\prime }=D_{x}F(t,p(t),0)y.  \label{2.11}
\end{equation}%
Indeed the following theorem holds:

\begin{theorem}
\label{teo exp} If the characteristic exponents associated with the equation
in variations (\ref{2.11}) all have a negative real part, then the
The $T$-periodic solution, $p(t)$ of the equation (\ref{eq1}) is asymptotically
stable.
\end{theorem}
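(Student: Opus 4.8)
The plan is to reduce the asymptotic stability of the periodic orbit $p(t)$ to the asymptotic stability of the trivial solution of a system whose linear part has \emph{constant} coefficients, and then to invoke Perron's Theorem. First I would record that, under the substitution $x=z+p(t)$ from (\ref{2.9}) (taking $\alpha=0$, as in the variational equation (\ref{2.11})), the equation (\ref{eq1}) becomes (\ref{2.10}),
\begin{equation*}
z^{\prime }=D_{x}F(t,p(t),0)\,z+f(t,z,\alpha ),
\end{equation*}
where $f$ gathers the higher-order remainder of the Taylor expansion of $F$ about $(t,p(t),0)$, so that $f(t,z,\alpha )=o(\Vert z\Vert )$ as $z\to 0$, uniformly in $t$. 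Since $z\equiv 0$ corresponds exactly to the orbit $x=p(t)$, the asymptotic stability of $p(t)$ is \emph{equivalent} to the asymptotic stability of the trivial solution $z\equiv 0$ of (\ref{2.10}).

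The second step is the Floquet reduction of the linear part. Because $D_{x}F(t,p(t),0)$ is continuous and $T$-periodic in $t$, Floquet theory furnishes a fundamental matrix of (\ref{2.11}) of the form $\Phi (t)=P(t)e^{Rt}$, with $P(t)$ a nonsingular $T$-periodic matrix and $R$ a constant matrix whose eigenvalues are precisely the characteristic exponents of (\ref{2.11}). Performing the change of variables $z=P(t)w$ in (\ref{2.10}) and using $P^{\prime }(t)=D_{x}F(t,p(t),0)P(t)-P(t)R$ transforms the system into
\begin{equation*}
w^{\prime }=Rw+g(t,w,\alpha ),\qquad g(t,w,\alpha )=P(t)^{-1}f\bigl(t,P(t)w,\alpha \bigr),
\end{equation*}
which now has constant linear part $R$; since $P(t)$ and $P(t)^{-1}$ are bounded and $T$-periodic, the remainder $g$ is again $o(\Vert w\Vert )$ as $w\to 0$.

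Finally I would apply Perron's Theorem (\cite{CodLevin}, Chapter 13) to the $w$-system. By hypothesis every characteristic exponent has negative real part, hence every eigenvalue of $R$ has negative real part; combined with the higher-order bound on $g$, Perron's Theorem yields that $w\equiv 0$ is asymptotically stable. Undoing the bounded transformations $z=P(t)w$ and $x=z+p(t)$ carries this back to $z\equiv 0$ and therefore to $p(t)$, which completes the argument.

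The step I expect to be the main obstacle is the Floquet reduction carried out over the reals. The matrix $R$ defined through $e^{RT}=\Phi (T)$ may be complex even when (\ref{2.11}) is a real system, so one either works over $\mathbb{C}$ and checks that the real system inherits the stability, or passes to a real logarithm at the cost of taking $P(t)$ to be $2T$-periodic; in either case one must guarantee that $P(t)$ and $P(t)^{-1}$ are uniformly bounded, so that the higher-order character of $g$ survives the change of variables — which is exactly the hypothesis Perron's Theorem requires.
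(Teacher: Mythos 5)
Your proposal is correct and follows essentially the same route as the paper: the paper justifies this theorem (in the discussion immediately preceding its statement) by the very same chain — the substitution $x=z+p(t)$ of (\ref{2.9}) yielding (\ref{2.10}), the Floquet change of variables passing from periodic to constant coefficients, and Perron's Theorem from \cite{CodLevin}, Chapter 13. Your write-up is in fact more careful than the paper's sketch, since you state the remainder as $o(\Vert z\Vert )$ (which is what Perron's Theorem actually needs, rather than the paper's literal $O(\Vert z\Vert )$) and you flag the real-logarithm/boundedness-of-$P(t)$ technicality explicitly.
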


The asymptotic stability of the $T$-periodic solutions is given by the Theorem
\ref{teo 2.2} \ can be established as follows:

\begin{theorem}
If the equation in variations (\ref{2.11}) of the system (\ref{eq1}) with
respect to the $T$-periodic solution $p(t)$ that is obtained for $\alpha =0$
is such that the real parts of all its characteristic exponents are
negative, then the $T$-periodic solution $x(t,\alpha )$ obtained in Theorem %
\ref{teo 2.2} for $|\alpha |$ small is asymptotically stable.
\end{theorem}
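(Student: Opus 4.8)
The plan is to reduce the asymptotic stability of the perturbed periodic orbit $x(t,\alpha)$ to the stability of $p(t)$ at $\alpha=0$, which is already covered by Theorem \ref{teo exp}, using the continuous dependence of the characteristic multipliers on the parameter $\alpha$.

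First I would write the variational equation of (\ref{eq1}) along the perturbed periodic solution $x(t,\alpha)$, namely
\[
y^{\prime }=D_{x}F(t,x(t,\alpha ),\alpha )\,y.
\]
For each small $\alpha$ this is a linear $T$-periodic system, and its characteristic multipliers are the eigenvalues of the monodromy matrix $\Phi_{\alpha}(T)$, where $\Phi_{\alpha}(t)$ is the fundamental matrix solution normalized by $\Phi_{\alpha}(0)=I$. A characteristic exponent $\rho$ and its associated multiplier $\mu$ are related by $\mu=e^{\rho T}$, so that $|\mu|=e^{\operatorname{Re}(\rho)\,T}$ and, since $T>0$, we have $\operatorname{Re}(\rho)<0$ if and only if $|\mu|<1$. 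Thus the hypothesis at $\alpha=0$ says precisely that every multiplier of $\Phi_{0}(T)$ lies strictly inside the unit disk.

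Next I would establish that the multipliers vary continuously with $\alpha$. By the smoothness hypotheses on $F$ together with the uniform limit $x(t,\alpha )\to p(t)$ supplied by Theorem \ref{teo 2.2}, the coefficient matrix $D_{x}F(t,x(t,\alpha ),\alpha )$ converges to $D_{x}F(t,p(t),0)$ uniformly on $[0,T]$ as $\alpha\to 0$. Continuous dependence of solutions of linear systems on their coefficients then gives $\Phi_{\alpha}(T)\to \Phi_{0}(T)$, and since the eigenvalues of a matrix depend continuously on its entries, the characteristic multipliers depend continuously on $\alpha$. Because the condition $|\mu|<1$ is open, it persists under small perturbation: for $|\alpha|$ sufficiently small every multiplier of $\Phi_{\alpha}(T)$ still lies inside the unit disk, equivalently every characteristic exponent of the variational equation along $x(t,\alpha)$ has negative real part.

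Finally, applying Theorem \ref{teo exp} to the periodic solution $x(t,\alpha )$ in place of $p(t)$ — the argument there, resting on Perron's Theorem and the Floquet reduction to constant coefficients, is valid for any $T$-periodic solution whose variational equation has all exponents with negative real part — yields the asymptotic stability of $x(t,\alpha )$ for $|\alpha|$ small. The main obstacle is the continuity claim for the multipliers: one must ensure that the convergence $x(t,\alpha )\to p(t)$ is \emph{uniform} in $t$ (which Theorem \ref{teo 2.2} guarantees) so that the coefficient matrices converge uniformly on $[0,T]$ and the monodromy matrices converge; the subsequent passage from matrix convergence to eigenvalue convergence is standard but should be invoked explicitly.
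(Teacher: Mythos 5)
Your proposal is correct and follows essentially the same route as the paper: both pass to the variational equation along $x(t,\alpha)$, use continuity of the monodromy matrix $B(T,\alpha)$ in $\alpha$ to keep all characteristic multipliers strictly inside the unit disk for $|\alpha|$ small, and then invoke Theorem \ref{teo exp} with $x(t,\alpha)$ in place of $p(t)$. Your write-up is in fact somewhat more explicit than the paper's, since you justify the continuity of the multipliers via the uniform convergence $x(t,\alpha)\to p(t)$ and spell out the exponent--multiplier correspondence $|\mu|=e^{\operatorname{Re}(\rho)T}$, which the paper leaves implicit.
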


\begin{proof}
It should be noted that, according to the hypotheses stated in
Observation  \ref{obs 2.3}, the conclusions of Theorem \ref{teo 2.2}
continue to be obtained. Now, the equation in variations of the equation (%
\ref{eq1}) with respect to the $T$-periodic solution $x(t,\alpha )$ is
given by the system of periodic coefficients
\begin{equation}
y^{\prime }=D_{x}F(t,x(t,\alpha ),\alpha )y.  \label{2.12}
\end{equation}%
It is observed that the equation (\ref{2.12}) reduces to the equation (\ref%
{2.11}) for $\alpha =0$. According to this, if \ $B=B(t,\alpha )$ is a
fundamental matrix of the equation (\ref{2.11}), such that $B(0,\alpha )=I$%
, $I$ the identity matrix, then the characteristic multipliers associated
with the equation (\ref{2.12}) are the eigenvalues of the matrix $B(T,%
\alpha )$. Taking into account the continuity of $B$ with respect to $%
\alpha $, and the fact that the eigenvalues of $B(T,0)$ have a modulus
less than one, then so will the eigenvalues of $B(T,\alpha )$ with $|%
\alpha |$ small enough. For each such $\alpha $, a translation of the fixed
$\alpha $ to zero can be carried out, subsequently applying the Theorem \ref{teo exp}
with $x(t,\alpha )$ instead of $p(t)$, and so on its asymptotic
stability is obtained.
\end{proof}

\subsection{The Hopf Bifurcation Theorem in ${\mathbb{R} }^{2}$}
This section presents the Hopf Bifurcation Theorem for systems of
differential equations in two dimensions, in this theorem sufficient
conditions are given for the existence of a unique periodic solution located
in a neighborhood of an equilibrium solution. The outline of the
the presentation follows that made in \cite{NegSal}. Consider a
mono-parametric family of ordinary differential equations defined by
\begin{equation}
x^{\prime }=f(x,\alpha )  \label{3.1}
\end{equation}%
$f$ $\in $ $C^{k+1}$, the class of the functions with $(k+1)$-continuous
derivatives, $k\geq 3$, $f(0,\alpha )=0,$ for all $\alpha .$ Suppose that $%
x=0$ is an equilibrium point for the equation (\ref{3.1}), and has the
the following linearization
\begin{equation}
y^{\prime }=A(\alpha )y,  \label{3.2}
\end{equation}%
such that for $\alpha <0$, $x=0$ it is asymptotically stable, a center when $%
\alpha =0$ and unstable for $\alpha >0$. Denote by $\beta (\alpha )\pm i \gamma (\alpha )$
the eigenvalues corresponding to the matrix in the right
side of the equation (\ref{3.2}) and in relation to them suppose that
\begin{equation}
\beta (0)=0,\beta ^{\prime }(0)>0,\quad \gamma (0)>0.  \label{3.3}
\end{equation}%
The equation (\ref{3.1}) can be represented in the form
\begin{equation}
x^{\prime }=A(\alpha )x+F(x,\alpha ),\quad \left( \Vert F(x,\alpha )\Vert
\right)=O\left( \Vert x\Vert ^{2}\right) .  \label{3.4}
\end{equation}%
Without loss of generality, suppose that the matrix $A(\alpha )$ has the
Jordan canonical form, with the following notations:%
\begin{equation*}
x=col\left( x_{1},x_{2}\right),\quad F=col(p,q),
\end{equation*}%
\begin{eqnarray}
x_{1}^{\prime } &=\beta (\alpha )x_{1}-\gamma (\alpha )x_{2}+p\left(
x_{1},x_{2},\alpha \right)  \notag \\
&&  \label{3.5} \\
x_{2}^{\prime } &=\gamma (\alpha )x_{1}+\beta (\alpha )x_{2}+q\left(
x_{1},x_{2},\alpha \right) .  \notag
\end{eqnarray}
From the conditions imposed on the function $f$ \ it follows that if in (\ref{3.5})
is made the change of variables,
\begin{equation*}
x_{1}=r\cos (\theta ),\;x_{2}=r\sin(\theta ),
\end{equation*}%
then
\begin{eqnarray}
r^{\prime } &=&\beta (\alpha )r+p^{\ast }(r,\theta ,\alpha )\cos (\theta
)+q^{\ast }(r,\theta ,\alpha )\sin(\theta )  \notag \\
r\theta ^{\prime } &=&r(\alpha )r+q^{\ast }(r,\theta ,\alpha )\cos (\theta
)-p^{\ast }(r,\theta ,\alpha )\sin(\theta ),  \notag
\end{eqnarray}
where,
\begin{equation*}
\begin{array}{l}
p^{\ast }(r,\theta ,\alpha )=p(r\cos (\theta ),r\sin(\theta ),\alpha ),
\\
q^{\ast }(r,\theta ,\alpha )=q(r\cos (\theta ),r\sin(\theta ),\alpha ).%
\end{array}%
\end{equation*}%
Now, the following function:
\begin{equation*}
G(r,\theta ,\alpha )=\gamma (\alpha )+\frac{q^{\ast }(r,\theta ,\alpha )\cos
(\theta )-p^{\ast }(r,\theta ,\alpha )\sin(\theta )}{r},\;r\neq 0,
\end{equation*}%
satisfy
\begin{equation*}
G(0,\theta ,\alpha )=\gamma (\alpha ).
\end{equation*}%
From the hypotheses about $p$ and $q$ it follows that the function $G$ is of
class $C^{k}$, and from the fact that $\gamma (0)>0$, it can be inferred
that there are positive real numbers $\alpha _{0}$ y $h$ such that $%
G(r,\theta ,\alpha )>0$, for $\alpha \in \left( -\alpha _{0},\alpha
_{0}\right) ,$ $r\in (0,h)$, and \ $\theta \in \mathbb{R} .$ For each$%
r_{0}\in \lbrack 0,h)$ y $\theta _{0}\in \mathbb{R} $ the orbit of (\ref{3.5}%
) with initial point $(r_{0},\theta _{0})$ is denoted by means of the no
continuable solution, $r\left( \theta ,r_{0},\theta _{0},\alpha \right) ,$
of the problem:
\begin{equation}
\frac{dr}{d\theta }=R(r,\theta ,\alpha ),\quad r\left( \theta
_{0},r_{0},\theta _{0},\alpha \right) =r_{0},  \label{3.6}
\end{equation}%
where,
\begin{equation}
R(r,\theta ,\alpha )=\frac{\gamma (\alpha )+p^{\ast }(r,\theta ,\alpha )\cos
(\theta )+q^{\ast }(r,\theta ,\alpha )\sin(\theta )}{G(r,\theta
,\alpha )}.  \label{3.7}
\end{equation}%
Once determined the functions $r(\theta ,r_{0},\theta _{0},\alpha )$, the
solutions of equation (\ref{3.5}) can be obtained integrating the following
equation
\begin{equation}
\theta ^{\prime }=G\left( r\left( \theta ,r_{0},\theta _{0},\alpha \right)
,\theta ,\alpha \right)  \label{3.8}
\end{equation}%
$\left( r_{0},\theta _{0},\alpha \right) $ $\in [O,h)\times \mathbb{R}
\times (-\alpha _{0},\alpha _{0})$. On the other hand, since $\gamma (0)>0$,
from (\ref{3.7}) it can be seen that if $h$ and $\alpha _{0}$ are selected
small enough, the right side of the equation (\ref{3.6}) can be limited in
such a way that they are defined in the entire interval $[0,2\pi ]$ (this
statement is obtained as a consequence of Picard's classical theorem on the
existence of solutions to the equations ordinary differentials, (\cite{CodLevin}, page 12).
For each $\alpha $\ the solution obtained as described
above is denoted by $r(\theta ,c,\alpha )$. Now consider the so-called
displacement function
\begin{equation*}
V(c,\alpha )=r(2\pi ,c,\alpha )-c
\end{equation*}%
associated to (\ref{3.5}). Using the fact that the function $R$ belongs to
the space $C^{k}$, then writing
\begin{equation}
r(\theta ,c,\alpha )=u_{1}(\theta ,\alpha )c+u_{2}(\theta ,\alpha
)c^{2}+\ldots +u_{k}(\theta ,\alpha )c^{k}+B(\theta ,c,\alpha ),  \label{3.9}
\end{equation}%
with $B$ is a function of order bigger than $k$ in the variable $c$, and the
condition  $r(0,c,\alpha )=c$ implies that
\begin{equation}
u_{1}(0,\alpha )=1,\;u_{2}(0,\alpha )=\ldots =u_{k}(0,\alpha )=B(0,c,\alpha
)=0.  \label{3.10}
\end{equation}%
Introducing the expression (\ref{3.9}) in the equation (\ref{3.6}) and
equating the corresponding coefficients with the same power of $c$, and
integrating in the following equations:

\begin{equation}
\frac{\partial u_{1}}{\partial \theta }=\frac{\beta (\alpha )}{r(\alpha )}%
u_{1}(\theta ,\alpha ),\quad u_{1}(0,\alpha )=1  \label{3.11}
\end{equation}%
\begin{equation}
\frac{\partial u_{l}}{\partial \theta }=U_{l}(\theta ,\alpha
),\;u_{l}(0,\alpha )=0,\;l=2,3,\ldots   \label{3.12}
\end{equation}%
it is possible to get the functions $u_{i}$ which are constructed step by
step from $u_{l}$, for $i<l.$ From the equation (\ref{2.11}) is obtained
that
\begin{equation*}
u_{1}(\theta ,\alpha )=\exp \left( \theta \frac{\beta (\alpha )}{\gamma
(\alpha )}\right).
\end{equation*}%
From the theory given previously, we can show how the $\mathbb{R}
^{2}$ version in $C^{k+1}$ of the Hopf Bifurcation Theorem.
\begin{theorem}
{Hopf Bifurcation} \label{Hopf} \newline
Considering the system (\ref{3.5}) and suppose that (\ref{3.3}) hold. Then
there exists a number $\delta$ in $(0,h)$ and a real function $\alpha $ from $(0,\delta )$ a belonging to $C^{k-1}$,
that satisfies
\begin{equation*}
\alpha (0)=\alpha ^{\prime }(0)=0.
\end{equation*}%
Also, the following holds, the number $\sigma $ defined by
\begin{equation*}
\sigma \equiv \sup \{|\alpha (c)|:c\in \lbrack 0,\delta )\}
\end{equation*}%
there exists and is lower than que $\alpha _{0}$, and with the property that
the closed periodic orbit of the system (\ref{3.5}) with $\alpha \in (-\sigma,\sigma )$ is
obtained only for $\alpha =\alpha (c)$.
\end{theorem}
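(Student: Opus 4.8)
The plan is to reduce the existence of closed orbits of \eqref{3.5} near the origin to the zeros of the displacement (return) function, and then to solve for the parameter by the implicit function theorem. First I would use that $\theta'=G>0$ on the relevant region, so that the orbits near the origin are graphs $r=r(\theta,c,\alpha)$ of the solution of \eqref{3.6}, and a closed orbit occurs exactly when $r$ returns to its initial value after $\theta$ advances by $2\pi$. Hence I introduce the displacement function $V(c,\alpha)=r(2\pi,c,\alpha)-c$, whose zeros with $c>0$ are precisely the closed orbits. Because the origin is an equilibrium for every $\alpha$, the function $r\equiv 0$ solves \eqref{3.6}, so $V(0,\alpha)=0$ identically; using that $R\in C^{k}$ (whence $V$ is $C^{k}$ in $(c,\alpha)$), the Hadamard factorization gives $V(c,\alpha)=c\,W(c,\alpha)$ with $W\in C^{k-1}$, and nontrivial orbits correspond to $W(c,\alpha)=0$.

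Next, using the expansion \eqref{3.9}--\eqref{3.10} I would evaluate $W$ at $c=0$. Since $V(c,\alpha)=[u_1(2\pi,\alpha)-1]c+u_2(2\pi,\alpha)c^{2}+\cdots$, one gets
\[
W(0,\alpha)=u_1(2\pi,\alpha)-1=\exp\!\Big(2\pi\,\tfrac{\beta(\alpha)}{\gamma(\alpha)}\Big)-1 .
\]
The hypotheses \eqref{3.3} give $W(0,0)=0$ and $\partial_\alpha W(0,0)=2\pi\,\beta'(0)/\gamma(0)>0$. The implicit function theorem then yields a number $\delta\in(0,h)$ and a $C^{k-1}$ function $\alpha=\alpha(c)$ with $\alpha(0)=0$ and $W(c,\alpha(c))=0$; crucially, this branch is the unique zero set of $W$ near $(0,0)$, which is exactly the assertion that closed orbits occur only for $\alpha=\alpha(c)$. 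The bound $\sigma<\alpha_0$ follows by shrinking $\delta$: since $\alpha$ is continuous with $\alpha(0)=0$, the supremum of $|\alpha|$ over a small enough $[0,\delta)$ stays strictly below $\alpha_0$.

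The step I expect to be the real obstacle is the relation $\alpha'(0)=0$. Differentiating $W(c,\alpha(c))\equiv 0$ gives $\alpha'(0)=-\partial_c W(0,0)/\partial_\alpha W(0,0)$, and from the expansion $\partial_c W(0,0)=u_2(2\pi,0)$, so everything reduces to showing $u_2(2\pi,0)=0$. For this I would expand $R(r,\theta,\alpha)$ from \eqref{3.7} in powers of $r$: its linear coefficient is $\beta(\alpha)/\gamma(\alpha)$ (consistent with \eqref{3.11}), and its quadratic coefficient $R_2(\theta,\alpha)$ drives the equation \eqref{3.12} for $u_2$, namely $\partial_\theta u_2=(\beta/\gamma)u_2+R_2\,u_1^{2}$. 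At $\alpha=0$ one has $\beta(0)=0$, hence $u_1\equiv 1$, and the equation collapses to $\partial_\theta u_2=R_2(\theta,0)$, so that $u_2(2\pi,0)=\int_0^{2\pi}R_2(\theta,0)\,d\theta$. Using that $p,q=O(\Vert x\Vert^{2})$, a short computation identifies
\[
R_2(\theta,0)=\frac{1}{\gamma(0)}\big[\,p_2(\cos\theta,\sin\theta)\cos\theta+q_2(\cos\theta,\sin\theta)\sin\theta\,\big],
\]
where $p_2,q_2$ are the homogeneous quadratic parts of $p,q$ at $\alpha=0$. This is a homogeneous trigonometric polynomial of degree three in $\theta$, so each monomial has the form $\cos^{i}\theta\,\sin^{3-i}\theta$ and integrates to zero over a full period by parity. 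Therefore $u_2(2\pi,0)=0$ and $\alpha'(0)=0$, completing the plan. The delicate points are the loss of one derivative in the Hadamard factorization, which is why the branch is only $C^{k-1}$, and the bookkeeping in extracting $R_2$; the vanishing of the integral is then automatic from the odd-degree trigonometric structure.
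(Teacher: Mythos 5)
Your proposal is correct and follows essentially the same route as the paper: the displacement function $V(c,\alpha)=r(2\pi,c,\alpha)-c$, factoring out $c$ to get the reduced equation, the implicit function theorem at $(0,0)$ using $\partial_\alpha u_1(2\pi,0)=2\pi\beta'(0)/\gamma(0)\neq 0$, and the vanishing $u_2(2\pi,0)=0$ to force $\alpha'(0)=0$. Your two refinements are sound and arguably cleaner than the paper's versions: the parity argument that $\int_0^{2\pi}R_2(\theta,0)\,d\theta=0$ because $R_2(\theta,0)$ is an odd-degree homogeneous trigonometric polynomial (the paper instead solves the linear equation for $u_2$ explicitly, with a slightly garbled forcing term), and the observation that $\sigma<\alpha_0$ follows from mere continuity of $\alpha$ at $0$ after shrinking $\delta$ (the paper's argument tying this to $\alpha'(0)=0$ is more convoluted).
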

\begin{proof}
The proof of this theorem will be obtained as a consequence of the
implicit function theorem. It can be observed that the orbit that passes
through the point $(c,0)$, $c\in (0,h)$, is closed if and only if $V(c,\alpha )=0$. Now, this equality is fulfilled for $c=0$ or also when
\begin{equation}
V(c,\alpha )=u_{2}(2\pi ,\alpha )-1+u_{2}(2\pi ,\alpha )c+\ldots +u_{k}(2\pi,\alpha )c^{k-1}+B(2\pi ,c,\alpha )=0. \label{3.13}
\end{equation}
According to the hypotheses $B(2\pi,c,\alpha )$ is of class $C^{k-1}$ and of
order greater than $k- 1$ concerning $c$. The function $u_{1}$ is known explicitly and $u_{1}(2\pi ,0)=\exp (0)=1,$ therefore the equation (\ref{3.13}%
) is satisfied for $c=\alpha =0$. Also from (\ref{3.13}) we obtain that
\begin{equation}
\frac{\partial V}{\partial \alpha }(0,0)=\frac{2\pi }{\gamma (0)}\beta
^{\prime }(0)=0, \label{3.14}
\end{equation}
and as can be seen, the hypotheses of the implicit function theorem
are fulfilled, from which the existence of the positive number $\delta $ and the function $\alpha :(0,\delta )\longrightarrow \mathbf{R},$
such that $\alpha (0)=0$ and $V(c,\alpha (c))=0$. When taking derivative with respect to $c$ in the identity $V(c,\alpha (c))=0$, we obtain:
\begin{equation}
\frac{\partial u_{1}}{\partial \alpha }(2\pi ,0)\alpha ^{\prime
}(0)+u_{2}(2\pi ,0)=0 \label{3.15}
\end{equation}
The equation (\ref{3.12}) with $l=2,$ has the form
\begin{equation*}
\frac{\partial u_{2}}{\partial \theta }=\frac{\beta (\alpha )}{\gamma
(\alpha )}u_{2}+\exp \left( \theta \frac{\beta (\alpha )}{\gamma (\alpha )}%
\right) (a_{0}\cos(\theta )+b_{0}\sin(\theta )),
\end{equation*}
by solving this first-order linear equation concerning $\theta $,
it can be easily observed that $u_{2}(2\pi ,0)=0,$ and from (\ref{3.15}) it is
obtained that $\alpha ^{\ast }(0)=0,$ since $\frac{\partial u_{1}}{\partial
\theta }(2\pi,0)$ is not zero. The condition $\alpha ^{\prime }(0)=0,$
guarantees that the number $\delta $ can be taken such that $\sigma
$ is less than $\alpha _{0}$, since otherwise $\alpha (c)\geq 0$
for all $\delta $ would lead to $\alpha ^{\prime }(0)\geq 1$, and this is
impossible. Which completes the proof
\end{proof}
An interesting model where was applied the Hopf bifurcation theorem is presented in
\cite{AkhterBitycoin}.
\section{The Integral Average Method}
This section theoretically describes the integral averaging procedure, which
in this work for convenience is called the averaging method. This analysis
technique is due to the Soviet physic-mathematicians N. Krilov and N.
Bogoliubov. A detailed exposition of the original approach to the theory is
found in\cite{BogMit}. A more recent presentation of this method can be seen in
\cite{Mustapha}.
The approach presented in the present work continues from that carried out by
J. Hale in \cite{Hale80}.
The average method allows, using an adequate change of variables, to take
a non-autonomous differential equation, which has a periodic solution, into
another equation that is autonomous and with the property that the mentioned
periodic solution can be approximated satisfactorily and with the advantage
Of obtaining information about its stability. The method is applied to
systems of equations that have the form:
\begin{equation}
x^{\prime }=\alpha f(t,x,\alpha )  \label{4.1}
\end{equation}
$t\in \mathbf{\mathbb{R}},$ $x\in U,$ $U$ a bounded subset of $\mathbf{\mathbb{R}}^{n}$,
with $\alpha$ such that $0<\alpha <1$ is a small parameter. Also is supposed that
$f:\mathbf{\mathbb{R}}\times \mathbb{R}^{n}\times\mathbb{R}^{+}\longrightarrow \mathbb{R}^{n}$
in the space of function $C^{k},k\geq 2$, is a bounded function on bounded sets in $\mathbb{R}^{n}$
with positive period $T$, respect to $t$.\newline
To describe the method consider beside system (\ref{4.1}) the following \textit{average system}:
\begin{equation}
x^{\prime }=\alpha f_{0}(x)  \label{4.2}
\end{equation}
where,
\begin{equation}
f_{0}(x)=\frac{1}{T}\int_{0}^{T}f(t,x,0)dt.  \label{4.3}
\end{equation}
The basic problem in using the average method consists of looking for in which sense
the solutions of the autonomous system (\ref{4.2}) are close to the solutions of the
more complicated and not autonomous system (\ref{4.1}). In the next theorem
it is shown that there exists a variable change that takes system (\ref{4.1}) in the
system
\begin{equation}
y^{\prime }=\alpha \left[ f_{0}(y)+\alpha f_{1}(t,y,\alpha )\right] +O\left(
\alpha ^{3}\right) ,  \label{4.4}
\end{equation}
and from this new system, as will be seen, it is possible to obtain the required information about the $T$-periodic solutions of the system (\ref{4.1}).
In the following result, all the necessary tools are given to
achieve the stated objective.
\begin{theorem}
{Integral Averages}\label{Averages}\newline
For the differential equation (\ref{4.1}) the following change of variable of class $C^{k}$
\begin{equation}
x=y+\alpha u(t,y), \label{4.5}
\end{equation}
where the function
$u$ is $T$-periodic with respect to the variable $t$, produces the following
results:
\begin{enumerate}
\item {Under the change of variables (\ref{4.5}) the differential equation (\ref{4.1}) takes the form (\ref{4.4}), such that the
function $f_{1}$ becomes $T$-periodic with respect to the variable $t$.}
\item {If $x(t)$ and $y(t)$ are the solutions of (\ref{4.1}) such that for $t=0$ take the values$x_{0}$ and $y_{0}$ respectively, and furthermore
$\left\vert x_{0}-y_{0}\right\vert =O(\alpha ),$ then $|x(t)-y(t)|=O(\alpha )$ on a time scale of order $O(\frac{1}{\alpha }$)}.
\item {If $y^{\ast }$ is a hyperbolic equilibrium point of equation (\ref{4.2}) then there exists a $\alpha _{0}>0$ such that, for all $\alpha$
in the interval $(0,\alpha_{0}]$, the differential equation (\ref{4.1}) has a
unique $T$-periodic solution $x(t,\alpha )$ in a neighborhood of $y^{\ast }$ continuous at $t$
and $\alpha $, and such that
\begin{equation*}
\lim_{\alpha \longrightarrow 0^{+}}x(t,\alpha )=y^{\ast }.
\end{equation*}
Furthermore, the solution $x(t,\alpha)$ has the same type of stability as the equilibrium
solution $y^{\ast }$ of equation (\ref{4.2})}.
\end{enumerate}
\end{theorem}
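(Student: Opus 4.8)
The plan is to establish the three assertions in sequence, with the first supplying the change-of-variables machinery used in the other two. Throughout I assume $f_0$ as in (\ref{4.3}) and work on a fixed bounded set containing $y^{\ast}$.

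For assertion (1), I would substitute the near-identity transformation (\ref{4.5}) directly into (\ref{4.1}). Differentiating $x=y+\alpha u(t,y)$ along solutions gives $x'=(I+\alpha D_y u)y'+\alpha u_t$, so that $(I+\alpha D_y u)y'=\alpha f(t,y+\alpha u,\alpha)-\alpha u_t$. Taylor-expanding $f(t,y+\alpha u,\alpha)=f(t,y,0)+\alpha[D_x f(t,y,0)u+\partial_\alpha f(t,y,0)]+O(\alpha^2)$ and inverting $(I+\alpha D_y u)^{-1}=I-\alpha D_y u+O(\alpha^2)$, the right-hand side becomes $\alpha[f(t,y,0)-u_t]+O(\alpha^2)$. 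The decisive choice is to make the $O(\alpha)$ term equal $\alpha f_0(y)$, which forces $u_t=f(t,y,0)-f_0(y)$; the right side here has zero mean over a period by the definition (\ref{4.3}) of $f_0$, so $u(t,y)=\int_0^t[f(s,y,0)-f_0(y)]\,ds$ is well defined, of class $C^k$, and $T$-periodic in $t$ since $u(T,y)=u(0,y)=0$. Collecting the surviving second-order terms defines $f_1$, which inherits $T$-periodicity in $t$ from $f$, $u$, and $u_t$, yielding exactly the form (\ref{4.4}).

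For assertion (2), reading $y(t)$ as the solution of the averaged system (\ref{4.2}), I would compare the two solutions through (\ref{4.4}). Writing $\tilde y$ for the solution of the transformed equation, the bound $|x-\tilde y|=\alpha|u|=O(\alpha)$ is immediate because $u$ is bounded on bounded sets. Subtracting (\ref{4.2}) from (\ref{4.4}) and using the local Lipschitz constant $L$ of $f_0$ gives $\frac{d}{dt}|\tilde y-y|\le \alpha L|\tilde y-y|+C\alpha^2$, whence Gronwall's inequality yields $|\tilde y(t)-y(t)|\le |\tilde y(0)-y(0)|\,e^{\alpha L t}+(C\alpha/L)(e^{\alpha L t}-1)$. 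On $0\le t\le M/\alpha$ the factor $e^{\alpha L t}\le e^{LM}$ is bounded independently of $\alpha$, so an $O(\alpha)$ initial discrepancy remains $O(\alpha)$; combined with $|x-\tilde y|=O(\alpha)$ this gives $|x(t)-y(t)|=O(\alpha)$ on the time scale $O(1/\alpha)$.

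For assertion (3), existence and uniqueness I would obtain from a Poincaré-map argument rescaled by $\alpha$. Let $x(t;x_0,\alpha)$ solve (\ref{4.1}) with $x(0)=x_0$; by Lemma \ref{lem1} a $T$-periodic solution is a zero of $g(x_0,\alpha)=x(T;x_0,\alpha)-x_0$. Integrating (\ref{4.1}) gives $g(x_0,\alpha)=\alpha\int_0^T f(s,x(s;x_0,\alpha),\alpha)\,ds$, so $G(x_0,\alpha)=\frac1\alpha g(x_0,\alpha)$ extends continuously to $\alpha=0$ with $G(x_0,0)=Tf_0(x_0)$. Then $G(y^{\ast},0)=0$ and $D_{x_0}G(y^{\ast},0)=T\,Df_0(y^{\ast})$ is invertible precisely because $y^{\ast}$ is hyperbolic; the implicit function theorem supplies $\alpha_0>0$, a locally unique continuous branch $\alpha\mapsto x_0(\alpha)$ with $x_0(0)=y^{\ast}$ and $G(x_0(\alpha),\alpha)=0$, and hence the periodic solution $x(t,\alpha)=x_0(\alpha)+O(\alpha)\to y^{\ast}$ uniformly in $t$. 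The stability claim is where I expect the real work: the type of stability of $x(t,\alpha)$ is read from the Floquet multipliers of $\dot\xi=\alpha D_x f(t,x(t,\alpha),\alpha)\xi$, i.e. the eigenvalues of the monodromy matrix $M(\alpha)=\Phi(T)$, $\Phi(0)=I$. Expanding $\Phi(T)=I+\alpha\int_0^T D_x f(s,y^{\ast},0)\,ds+O(\alpha^2)=I+\alpha T\,Df_0(y^{\ast})+O(\alpha^2)$, using that averaging commutes with $D_x$, each multiplier takes the form $1+\alpha T\mu+O(\alpha^2)$ with $\mu$ an eigenvalue of $Df_0(y^{\ast})$, so $|1+\alpha T\mu|^2=1+2\alpha T\,\mathrm{Re}(\mu)+O(\alpha^2)$ sits inside or outside the unit circle exactly as $\mathrm{Re}(\mu)<0$ or $>0$. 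Hyperbolicity makes every $\mathrm{Re}(\mu)$ nonzero, so for small $\alpha$ the counts of multipliers inside and outside the unit circle match the half-plane counts for $Df_0(y^{\ast})$, which is the asserted correspondence. The main obstacle is making the $O(\alpha^2)$ remainder in the monodromy expansion uniform and perturbing the multipliers so the strict sign conditions survive — hyperbolicity of $y^{\ast}$ is exactly what keeps these inequalities open for all sufficiently small $\alpha$.
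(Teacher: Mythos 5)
Your proposal is correct, and for parts 1 and 2 and for the existence/uniqueness half of part 3 it follows essentially the same route as the paper: the same choice $u(t,y)=\int_0^t\left[f(s,y,0)-f_0(y)\right]ds$, whose zero mean forces $T$-periodicity; the same Gronwall comparison on a time interval of length $O(1/\alpha)$ combined with $|x-\tilde y|=\alpha|u|=O(\alpha)$ and the triangle inequality; and the same rescaled displacement function $G(x_0,\alpha)=\frac{1}{\alpha}\left(x(T;x_0,\alpha)-x_0\right)$ with $G(y^{\ast},0)=Tf_0(y^{\ast})=0$ and $D_{x_0}G(y^{\ast},0)=T\,D_xf_0(y^{\ast})$ invertible by hyperbolicity, followed by the implicit function theorem. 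Where you genuinely diverge is the stability claim. The paper stays in the averaged coordinates: it translates $y=z+y^{\ast}$ in (\ref{4.4}) to obtain equation (\ref{4.9}), regards it as a perturbation of the constant-coefficient system (\ref{4.10}), $z^{\prime}=\alpha Az$ with $A=D_xf_0(y^{\ast})$, and argues that, all the changes of variables being invertible, the $T$-periodic solution of (\ref{4.1}) inherits the stability of the origin of that linear system, leaning on the Floquet/Perron machinery already set up in Section 2. You instead work directly on (\ref{4.1}), expanding the monodromy matrix of the variational equation along the periodic solution as $M(\alpha)=I+\alpha T\,D_xf_0(y^{\ast})+O(\alpha^2)$ and tracking the multipliers; this is more quantitative and makes the role of hyperbolicity visible at the level of multipliers, whereas the paper's formulation never needs to expand $M(\alpha)$ at all. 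One caution about your version: when $D_xf_0(y^{\ast})$ has repeated eigenvalues, the eigenvalues of a matrix perturbed by $O(\alpha^2)$ need not move by $O(\alpha^2)$ (only H\"older-continuously), so the clean formula $1+\alpha T\mu+O(\alpha^2)$ for each multiplier is not literally justified as stated. Your conclusion nevertheless survives: apply continuity of eigenvalues to the rescaled matrix $(M(\alpha)-I)/(\alpha T)\longrightarrow D_xf_0(y^{\ast})$, so each multiplier is $1+\alpha T\nu_i(\alpha)$ with $\nu_i(\alpha)\to\mu_i$, and hyperbolicity keeps $\mathrm{Re}\,\mu_i$ bounded away from zero, which is exactly what makes the strict inequalities $|1+\alpha T\nu_i(\alpha)|\lessgtr 1$ persist for all sufficiently small $\alpha>0$.
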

\begin{proof}
First, the calculations leading to the desired change of variables will be done,
\begin{equation*}
F(t,x,\alpha )=f(t,x,\alpha )-f_{0}(x)
\end{equation*}
and let
\begin{eqnarray*}
u(t,x) &=&\int_{0}^{t}{f(s,x,0)}-f_{0}(x)ds \\
&=&\int_{0}^{t}F(s,x,0)ds.
\end{eqnarray*}
The function $u$ is of class $C^{k}$ in all its variables. Neither
Now find out what is $T$-periodic in $t$:
\begin{eqnarray*}
u(t+T,x) &=&\int_{0}^{t+T}F(s,x,0)ds \\
&=&\int_{0}^{t}F(s,x,0)ds+\int_{t}^{t+T}F(s,x,0)ds \\
&=&u(t,x)+\int_{t}^{t+T}f(s,x,0)ds-\int_{t}^{t+T}f_{0}(x)ds \\
&=&u(t,x)+\int_{0}^{T}f(s,x,0)ds-\int_{0}^{T}f_{0}(x)ds \\
&=&u(t,x),
\end{eqnarray*}
With the function $u$ defined above, the change of variables $x=y+\alpha u(t,y)$ and it is observed that for small $\alpha$,
\begin{equation}
y=x-\alpha u(t,x)+O\left( \alpha ^{2}\right) , \label{4.6}
\end{equation}
given the invertibility of change. It can be observed that
\begin{eqnarray*}
\left\vert \frac{y-x+\alpha u(t,x)}{\alpha ^{2}}\right\vert &=&\left\vert
\frac{u(t,x)-u(t,y)\mid }{\alpha }\right\vert \\
&=&\left\vert \frac{u(t,y+\alpha u(t,y))-u(t,y)}{\alpha }\right\vert \\
&=&\left\vert \frac{u(t,y+\alpha \mid u(t,y))-u(t,y)+O(\alpha )}{\alpha }%
\right\vert \\
&\leq &\left\vert \frac{u(t,y+\alpha u(t,y)-u(t,y)}{\alpha }\right\vert
+\left\vert \frac{O(\alpha )}{\alpha }\right\vert
\end{eqnarray*}%
which confirms (\ref{4.6}).\newline
Now, if we take the derivative with respect to $t$ in the expression (\ref{4.5}) we obtain,
\begin{eqnarray*}
x^{\prime } &=&y^{\prime }+\alpha u^{\prime }(t,y)+\alpha
D_{y}u(t,y)y^{\prime } \\
&=&\left[ I+\alpha D_{y}u(t,y)\right] y^{\prime }+\alpha u^{\prime }(t,y),
\end{eqnarray*}%
from which,
\begin{equation*}
\begin{aligned} \left[I + \alpha D_y u(t,y)\right] y^{\prime} &= x^{\prime}
- \alpha u^{\prime}(t,y) \\ & = \alpha f(t,x,\alpha) - \alpha
u^{\prime}(t,y) \\ & = \alpha f(t,y + \alpha u(t,y),\alpha) - \alpha
u^{\prime}(t,y) \\ & = \alpha \left(f(t,y,\alpha)+\alpha D_y f(t,y,\alpha)
u(t,y) +O(\alpha^2) - u^{\prime}(t,y)\right) \\ & = \alpha
(f_0(y)+F(t,y,0)+\alpha D_y f(t,y,\alpha) u^{\prime}(t,y)\\ &\quad+ O
(\alpha^2) - u^{\prime}(t,y). \end{aligned}
\end{equation*}%
But, remembering that $u^{\prime }(t,y)=F(t,y,0)$ and taking into account the
invertibility of the change of variables given in (\ref{4.5}), we have that
\begin{eqnarray*}
y^{\prime } &=&\left[ I+\alpha D_{y}u(t,y)\right] ^{-1}f_{0}(y)+\alpha
D_{y}f(t,y,\alpha )u(t,y)+O(\alpha ^{2}) \\
&=&\alpha f_{0}(y)+\alpha ^{2}D_{y}f(t,y,\alpha )u(t,y)-\alpha
^{2}D_{y}u(t,y)f_{0}(y)+O(\alpha ^{3}),
\end{eqnarray*}%
which leads to the equation
\begin{equation*}
y^{\prime }=\alpha f_{0}(y)+\alpha ^{2}f_{1}(t,y,\alpha )+O(\alpha ^{3}),
\end{equation*}%
where, as can be seen, the function $f_{1}$ is $T$-periodic at t. With this,
part 1 of the theorem has been proved.\newline
To prove part 2, let $y(t)$ and $y_{\alpha }(t)$ be the
respective solutions of equations (\ref{4.2}) with initial condition $y_{0}$ and (\ref{4.4}) with initial condition $y_{\alpha _{0}}$, both
at $t=0$. These solutions satisfy:
\begin{equation*}
\begin{aligned} y_{\alpha}(t)-y(t)=& y_{\alpha_0} -y_{0}+\alpha
\int_{0}^{t}\left[f_{0}\left(y_{\alpha}(s)\right)-f_{0}(y(s))\right] d s \\
&-\alpha^{2} \int_{0}^{t} f_{1}\left(y_{\alpha}(s), s, \alpha\right) d s .
\end{aligned}
\end{equation*}%
Let $c(t)=\left( y_{\alpha }-y\right) (t),$ then
\begin{equation*}
|c(t)|\leq |c(0)|+\alpha L\int_{0}^{t}|c(s)|ds+\alpha ^{2}Mt,
\end{equation*}%
where, $L$ is the Lipschitz constant of $f_{0}$ and $M$ is the supremum of $%
f_{1}$ on the set U. Now applying Gronwall's Lemma (\cite{Hale80}) we have
\begin{eqnarray*}
|c(t)| &\leq &|c(0)|e^{Lt}+\alpha ^{2}M\int_{0}^{t}e^{L(t-b)}ds \\
&\leq &(|c(0)|+\alpha M/L)e^{Lt}.
\end{eqnarray*}%
From this we conclude that, if $\left\vert y_{\alpha _{0}}-y_{0}\right\vert
=O(\alpha )$, then for all $t\in \lbrack 0,\frac{1}{\alpha L}]$,
\begin{equation*}
\left\vert y_{\alpha }(t)-y(t)\right\vert =O(\alpha ).
\end{equation*}%
It can also be observed, according to the transformation (\ref{4.5}), that
\begin{equation}
\left\vert x(t)-y_{\alpha }(t)\right\vert =\alpha \left\vert u\left(
y_{\alpha },t,\alpha \right) \right\vert =O(\alpha ) \label{4.7}
\end{equation}
and finally, using the triangular inequality
\begin{equation*}
|x(t)-y(t)|\leq \left\vert x(t)-y_{\alpha }(t)\right\vert +\left\vert
y_{\alpha }(t)-y(t)\right\vert ,
\end{equation*}
from which it is observed that $|x(t)-y(t)|=O(\alpha ),$ which completes the
proof of part 2; However, it should be noted that by (\ref%
{4.7}), the hypothesis $\left\vert x_{0}-y_{0}\right\vert =O(\alpha )$ and the
triangular inequality gives that $\left\vert y_{\alpha_{0}}-y_{0}\right\vert =O(\alpha ),$
an issue that has been used fundamentally in the proof.\newline
To prove part 3, consider that $y^{\ast }$ is a hyperbolic equilibrium point of (\ref{4.2}), to address the problem of the
existence of $T$-periodic solutions for equation (\ref{4.1}) it must be
remembered that, according to Lemma \ref{lem1} the relation must be satisfied n
\begin{equation}
x\left( 0,x_{0},\alpha \right) =x\left( T,x_{0},\alpha \right) , \label{4.8}
\end{equation}%
using the fact that the solutions of (\ref{4.1}) also satisfy the
equality
\begin{equation*}
x\left( t,x_{0},\alpha \right) =x_{0}+\alpha \int_{0}^{t}f\left( s,x\left(
s,x_{0},\alpha \right) ,\alpha \right) ds,
\end{equation*}%
(\ref{4.8}) is equivalent to
\begin{equation*}
\int_{0}^{T}f\left( s,x\left( s,x_{0},\alpha \right) ,\alpha \right) ds=0.
\end{equation*}%
Putting,
\begin{equation*}
G(x_{0},\alpha )=\int_{0}^{T}f\left( s,x\left( s,x_{0},\alpha \right)
,\alpha \right) ds=0,
\end{equation*}%
taking into account the hypotheses $f_{0}(y^{\ast })=0$ and that the derivative $%
D_{x}f_{0}(y^{\ast })$ is invertible, the calculations are as follows:
\begin{eqnarray*}
G(y^{\ast },0) &=&\int_{0}^{T}f\left( s,x\left( s,y^{\ast },0\right)
,0\right) ds \\
&=&\int_{0}^{T}f(s,y^{\ast },0)ds=Tf_{0}(y^{\ast })=0,
\end{eqnarray*}%
and furthermore,
\begin{eqnarray*}
D_{x_{0}}G(y^{\ast },0) &=&\int_{0}^{T}D_{x}f\left( s,x\left( s,x_{0},\alpha
\right) ,\alpha \right) D_{x_{0}}x(s,x_{0},\alpha )ds|x_{0}=y^{\ast
},\;\alpha =0 \\
&=&\int_{0}^{T}f(s,y^{\ast },0)D_{x_{0}}x(s,y^{\ast },0)ds=Tf_{0}(y^{\ast
})=0.
\end{eqnarray*}
It is known, see \cite{Hale80}, that $D_{x{0}}x(s,y^{\ast },0)$ satisfy the variational equation
\begin{equation*}
\frac{dw}{ds}=\alpha D_{x}f(s,y^{\ast },\alpha )w,\quad w(0,y^{\ast },\alpha
)=I,
\end{equation*}
therefore, for $\alpha =0$ it is clear that $D_{x_{0}}x(s,y^{\ast
},0)=I $ \ $s\geq 0$. So, it is concluded that
\begin{equation*}
D_{x_{0}}G(y^{\ast },0)=\int_{0}^{T}D_{x}f(s,x(s,y^{\ast
},0)ds=TD_{x}f_{0}(y^{\ast }),
\end{equation*}
which implies the invertibility.
According to the Implicit Function Theorem, there exists $\alpha _{0}>0,$ a neighborhood centered at $y^{\ast }$ and radius $h$,
$N\left( y^{\ast },h\right) ,\;h>0$ and a function $x_{0}:\left( -\alpha
_{0},\alpha _{0}\right) \longrightarrow N\left( y^{\ast },h\right),$ such
that $G\left( x_{0}(\alpha ),\alpha \right) =0$ for all $\alpha $ and the
solution of the problem
\begin{equation*}
x^{\prime }=\alpha f(t,x,\alpha ),x(0)=x_{0}(\alpha )
\end{equation*}%
is $T$-periodic, and it is also the only $T$-periodic solution in $N\left( y^{\ast
},h\right) $ for each $\alpha \in (-\alpha _{0},\alpha _{0})$. From the Theorem
From the Implicit Function quote, it is also clear that the periodic solution in
question is continuous in $(t,\alpha )$ and $\lim_{\alpha \longrightarrow
0^{+}}x(t,\alpha )=y^{\ast },$ where $x(t,\alpha )$ denotes the $T$-periodic
solution, for each $\alpha $. To summarize, note that using the
change of variables (\ref{4.5}) the equation (\ref{4.1}) is brought to the form
(\ref{4.4}) and in this last equation the change of variables $y=z+y^{\ast }$
is performed to obtain the equation
\begin{equation}
z^{\prime }=\alpha Az+\alpha \left\{ \alpha f_{1}\left( t,z+y^{\ast },\alpha
\right) +f_{0}\left( z+y^{\ast }\right) -f_{0}\left( y^{\ast }\right)
-Az\right\} \label{4.9}
\end{equation}
where, $A=D_{x}f_{0}\left( y^{\ast }\right) $. The equation (\ref{4.9}) can
be considered as a perturbation of the linear system
\begin{equation}
z^{\prime }=\alpha Az. \label{4.10}
\end{equation}%
If the zero solution of (\ref{4.10}) is asymptotically stable, it will be
the $T$-periodic solution of (\ref{4.9}) corresponding to each $\alpha \in
(-\alpha _{0},\alpha _{0})$. If any eigenvalue of $A$ has a positive real part,
the zero solution of (\ref{4.10}) is unstable, therefore the
instability will also correspond to the corresponding periodic solution
of (\ref{4.9}) for each $\alpha \in (-\alpha _{0},\alpha _{0})$. Since all
the changes of variables that have been made are invertible, it follows that
the $T$-periodic solutions found for equation (\ref{4.1})
inherit the same type of stability as the hyperbolic equilibrium point
of (\ref{4.2}). Which completes the proof.
\end{proof}

\section{Method to Find the Neighborhood}
In the previous sections, two methods have been studied that allow
establishing the existence of $T$-periodic solutions of systems of
Differential equations. In Section 3, we saw the sufficient conditions under
which is a two-dimensional differential equation of autonomous type, which
depends on a parameter, has $T$-periodic solutions. The key result is that
the case was Hopf's Bifurcation Theorem. However, no results concerning the
stability of the orbit of the periodic solution was discussed there.
However, in that direction, some theorems guarantee stability, for
example in \cite{MMacCrack} gives results in this regard,
nevertheless, the techniques used therein
practice is difficult to apply. On the other hand, in Section 4, the
averaging method that is applied to non-autonomous equations is studied,
more precisely, the theory presented has its natural environment in
differential equations with right part $T$-periodic and continuously
differentiable up to a certain order $k\geq 2$, in such a way that the
calculations to be carried out are possible. There it was also seen that, in
the case of the averaged system having a hyperbolic equilibrium point, the
$T$-periodic solution of the originally proposed system presents the same
type of stability as such an equilibrium point. In this Section, it is proposed
to establish a connection between the two methods. The situation would be
like this: given a two-dimensional autonomous differential equation
satisfying the hypotheses of the Hopf Bifurcation Theorem, it is introduced
into the equation an independent variable using polar coordinates, then the
average of the resulting equation is calculated in such a way that the periodic
solution obtained from the average method, for uniqueness, must be
the same as that corresponding to the found by Hopf's Theorem and now with
the possibility of being able to decide on the stability of the $T$-periodic
solution and to be able to estimate approximately. To carry
out this task, some fundamental results from the article by S.N. Chow and J.
Mallet-Paret in \cite{ChowMallet}. The techniques developed in this article
have a notable influence on the method presented in this section. The method developed
here can be generalized to systems with dimensions greater than two, for
this it is interesting to take into account the theory presented in the book
of Kuznetsov \cite{Kuznetsov}.
But one can always try to make a finite expansion of $r_{1}$ in powers of $\alpha$ until achieving a fairly accurate approximation to $r$. What follows is based on this last idea and the results in \cite{ChowMallet} are based on this. Under this vision, the extended average method is now described, in the sense cited, to obtain the periodic solution resulting from the Hopf Bifurcation Theorem expanded up to order $k$ concerning a parameter $\mu $ in two-dimensional equations. Consider the equations in polar coordinates
\begin{equation}
\begin{aligned} r^{\prime}&=\mu R_{1}(r, \theta, \alpha)+\mu^{2} R_{z}(r,
\theta, \alpha)+\ldots \\ \theta^{\prime}&=w+\mu W_{1}(r, \theta,
\alpha)+\mu^{2} W_{2}(r, \theta, \alpha)+\ldots, \end{aligned}  \label{5.12}
\end{equation}
$\mu,\alpha $ are parameters, $\mu \in \left( -\mu_{0},\mu_{0}\right) $
and $w$ is constant,
the functions on the right-hand side of (\ref{5.12}) are assumed to be of class $C^{k}, $ for $k$ suitable for the calculations to be performed. Furthermore, only a finite number of coefficients $\mu_{k}$ are considered, and it is therefore sufficient that (\ref{5.12}) is represented by a finite Taylor series with remainder. In bifurcation problems, $\alpha$ represents the parameter that, when varied, produces the bifurcation and $\mu$ is a conversion factor to a new scale.\newline
It is clear that, if all functions $R_{j}$ are independent of $\theta $ then the circles $r=r_{0},$ where,
\begin{equation*}
\mu R_{1}\left( r_{0},\alpha \right) +\mu ^{2}R_{2}\left( r_{0},\alpha
\right) +\ldots =0
\end{equation*}
are periodic solutions of (\ref{5.12}). It follows from this that to
find periodic solutions of this equation, changes of variables must be introduced that make the $R_{j}$ independent of
the variable $\theta $. It is already known that the average method performs the situation described above, therefore the method will be applied, but with the variant that it will be applied inductively until averaging all the functions $R_{j}$, for all $j=1,\ldots,k,$. Truncating there
the equation and the amplitude of the periodic solutions are obtained approximately. To establish the inductive application of the average method, there is no loss of generality if it is assumed that the coefficients of $\mu ^{j},$ $j=1,\ldots,k-1$, are independent of $\theta $, for this reason, we begin by considering the equations:
\begin{eqnarray}
r^{\prime } &=&\mu R_{1}(r,\alpha )+\ldots +\mu ^{k-1}R_{k-1}(r,\alpha )+\mu
^{k}R_{k}(r,\theta ,\alpha )+O\left( \mu ^{k+1}\right) \\
\theta ^{\prime } &=&w+\mu \omega _{2}(r,\alpha )+\ldots +\mu
^{k-1}w_{k-1}(r,\alpha )+\mu ^{k}W_{k}(r,\theta ,\alpha )+O\left( \mu
^{k+1}\right).
\end{eqnarray}
\label{5.13}
Now, introducing in (\ref{5.13}) the new variables
\begin{equation}
\tilde{r}=r+\mu ^{k}u(r,B,\alpha ),\tilde{\theta}=\theta +\mu ^{k}v(r,\theta
,\alpha )  \label{5.14}
\end{equation}%
with differentials,
\begin{equation*}
\begin{array}{l}
\tilde{r}^{\prime }=r^{\prime }+\mu ^{k}\left( \frac{\partial u}{\partial r}%
(r,\theta ,\alpha )r^{\prime }+\frac{\partial u}{\partial \theta }(r,\theta
,\alpha )\theta ^{\prime }\right) \\
\tilde{\theta}^{\prime }=\theta ^{\prime }+\mu ^{k}\left( \frac{\partial v}{%
\partial r}(r,\theta ,\alpha )r^{\prime }+\frac{\partial v}{\partial
_{\theta }}(r,\theta ,\alpha )\theta ^{\prime }\right),%
\end{array}
\end{equation*}
doing the corresponding substitutions the following equations are obtained:
\begin{equation}
\begin{array}{l}
\tilde{r}^{\prime }=\mu R_{2}(\tilde{r},\alpha )+\ldots +\mu
^{k-1}R_{k-1}\left( \tilde{r},\alpha ^{\prime }\right) +\mu ^{k}\tilde{R}
_{k}\left( p^{\ast },\theta ,\alpha \right) +O\left( \mu ^{k+1}\right) \\
\tilde{\theta}^{\prime }=\omega +\mu \omega _{2}(\tilde{r},\alpha )+\ldots
+\mu ^{k-1}W_{k-1}(\tilde{r},\alpha )+\mu ^{k}\tilde{W}_{k}(\tilde{r},\theta
,\alpha )+O\left( \mu ^{k+1}\right) ,%
\end{array}
\label{5.15}
\end{equation}%
where,
\begin{equation}
\begin{array}{l}
\tilde{R}_{k}(\tilde{r},\tilde{\theta},\alpha )=R_{k}(\tilde{r},\tilde{\theta
},\alpha )+w\frac{\partial u}{\partial \theta }(\tilde{r},\tilde{\theta}
,\alpha ) \\
\tilde{W}_{k}(\tilde{r},\tilde{\theta},\alpha )=W_{k}(\tilde{r},\tilde{\theta
},\alpha )+w\frac{\partial v}{\partial \theta }(\tilde{r},\gamma,\alpha).
\end{array}
\label{5.16}
\end{equation}
The following lemma establishes how the functions $u$ and $v$ should be selected so that $\tilde{R}_{k}$ and $\tilde{W}_{k}$ become independent of the variable $\theta $.
\begin{lemma}
Let be the relation
\begin{equation}
\tilde{A}(r,\theta ,\alpha )=A(r,\theta ,\alpha )+w\frac{\partial b}{%
\partial \theta }(r,\theta ,\alpha ) \label{5.17}
\end{equation}
where $A$ is a given function and $2w$-periodic with respect to $\theta $. If $b$ is chosen as
\begin{equation*}
b(r,\theta ,\alpha )=-(1/w)\int_{0}^{\theta }A(r,s,\alpha )ds+(\theta /2\pi
w)\int_{0}^{2\pi }A(r,s,\alpha )ds
\end{equation*}
then the function, $\tilde{A}(r,\alpha )=\tilde{A}(r,\theta ,\alpha )$, that is, $\tilde{A}$ becomes independent of $\theta $. More specifically, the function $b$ thus selected is $2w$-periodic and the function $\tilde{A}$ corresponding to such function $b$ coincides with the average of $A$, that is,
\begin{equation*}
\tilde{A}(r,\alpha )=(1/2\pi )\int_{0}^{2\pi }A(r,\theta ,\alpha )d\theta .
\end{equation*}
\end{lemma}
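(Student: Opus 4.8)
The plan is to verify the lemma by direct computation, establishing all three assertions (independence of $\theta$, $2\pi$-periodicity of $b$, and the identification of $\tilde{A}$ with the average of $A$) essentially at once. First I would differentiate the proposed $b$ with respect to $\theta$. Applying the Fundamental Theorem of Calculus to the first integral, and noting that the integral in the second term does not depend on $\theta$, one gets
\begin{equation*}
\frac{\partial b}{\partial\theta}(r,\theta,\alpha) = -\frac{1}{w}A(r,\theta,\alpha) + \frac{1}{2\pi w}\int_0^{2\pi}A(r,s,\alpha)\,ds.
\end{equation*}

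Next I would substitute this into the defining relation (\ref{5.17}). Multiplying by $w$, the term $-A(r,\theta,\alpha)$ cancels exactly against the leading $A(r,\theta,\alpha)$, leaving
\begin{equation*}
\tilde{A}(r,\theta,\alpha) = \frac{1}{2\pi}\int_0^{2\pi}A(r,s,\alpha)\,ds,
\end{equation*}
which no longer depends on $\theta$ and is precisely the mean of $A$ over one period. This settles the independence-of-$\theta$ claim and the averaging identity simultaneously, so no separate argument is needed for either.

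It remains to check that $b$ is $2\pi$-periodic in $\theta$. For this I would compute $b(r,\theta+2\pi,\alpha)$ by splitting $\int_0^{\theta+2\pi} = \int_0^{2\pi} + \int_{2\pi}^{\theta+2\pi}$ and using the $2\pi$-periodicity of $A$ (via the substitution $s\mapsto s+2\pi$) to rewrite $\int_{2\pi}^{\theta+2\pi}A = \int_0^{\theta}A$. Writing $I = \int_0^{2\pi}A(r,s,\alpha)\,ds$, the shift adds $-I/w$ to the first term while the linear correction term contributes an extra $\frac{2\pi}{2\pi w}I = I/w$; these cancel, yielding $b(r,\theta+2\pi,\alpha) = b(r,\theta,\alpha)$.

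The algebra is routine; the only point deserving attention — and really the conceptual content of the lemma — is the periodicity step. The antiderivative $\int_0^{\theta}A(r,s,\alpha)\,ds$ is in general not periodic, since it grows by $I$ over each period, and the linear-in-$\theta$ correction $\frac{\theta}{2\pi w}I$ is engineered precisely to absorb that secular growth. This is the standard mechanism for solving the homological equation $w\,\partial_\theta b = \tilde{A} - A$ with a periodic unknown $b$: solvability over one period forces the constant $\tilde{A}$ to equal the average of $A$, which is exactly the obstruction being removed here.
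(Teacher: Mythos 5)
Your proposal is correct and follows essentially the same route as the paper's proof: a direct verification that differentiating $b$, substituting into (\ref{5.17}), cancels $A(r,\theta,\alpha)$ and leaves the mean $(1/2\pi)\int_0^{2\pi}A(r,s,\alpha)\,ds$, together with a check that the linear-in-$\theta$ correction makes $b$ itself $2\pi$-periodic. The paper merely presents the two steps in the opposite order (periodicity of $b$ in detail first, the substitution asserted as "easily obtained"), so your write-up just fills in the substitution step more explicitly.
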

\begin{proof}
Note that $b$ es $2\pi$- periodic respect to $\theta$:
\begin{equation*}
\begin{aligned} b(r, \theta+2 \pi, \alpha)=&-(1 / w) \int_{0}^{\theta+2 \pi}
A(r, s, \alpha) d s+\{(\theta+2 \pi) / 2 \pi w) 3 \int_{0}^{2 \pi} A(r, s,
\alpha) d s \\ =&-(1 / w) \int_{0}^{\theta} A(r, s, \alpha) d s-(1 / w)
\int_{\theta}^{\theta+2 \pi} A(r, s, \alpha) d s \\ &+(\theta / 2 \pi w)
\int_{0}^{2 \pi} A(r, s, \alpha) d s+(1 / w) \int_{0}^{2 \pi} A(r, s,
\alpha) d s \\ =&-(1 / w) \int_{0}^{\theta} A(r, s, \alpha) d s+(\theta / 2
\pi w) \int_{0}^{2 \pi} A(r, s, \alpha) d s \\ =& b(r, \theta, \alpha).
\end{aligned}
\end{equation*}
The relation, $\tilde{A}(r,\theta ,\alpha )=(1/2\pi )\int_{0}^{2\pi
}A(r,s,\alpha )ds=\tilde{A}(r,\alpha )$ is easily obtained by substituting $b(r,\theta ,\alpha )$ in the relation (\ref{5.17}).
\end{proof}
It can be observed that to obtain the average in (\ref{5.12}) at the same time the coefficients of $\mu,\;\mu ^{2},\ldots,\;\mu ^{k},$ the succession of average transformations to be applied, can be replaced by a single change of variables
\begin{equation*}
\begin{array}{l}
\tilde{r}=r+\mu u_{2}(r,\theta ,\alpha )+\ldots +\mu ^{k}u_{k}(r,\theta
,\alpha ) \\
\tilde{\theta}=\theta +\mu v_{1}(r,\theta ,\alpha )+\ldots +\mu
^{k}v_{k}(r,\theta ,\alpha )%
\end{array}%
\end{equation*}%
where, the functions $u_{j},v_{j}$ are chosen using the preceding lemma. The above statements will be applied to a concrete problem where the Hopf bifurcation appears. In what follows, all the hypotheses established in Section 3 concerning the autonomous and parameter-dependent differential equation, $x^{\prime }=f(x,\alpha )$, are considered valid. From this equation, it is interesting to use its equivalent form (normal form) (\ref{3.5}):
\begin{equation*}
\begin{array}{l}
x_{1}^{\prime }=\beta (\alpha )x_{1}-\gamma (\alpha )x=+p\left(
x_{1},x_{2},\alpha \right) \\
x_{2}^{\prime }=\gamma (\alpha )x_{2}+\beta (\alpha )x_{2}+q\left(
x_{1},x_{2},\alpha \right) .
\end{array}
\end{equation*}
From the hypotheses (\ref{3.3}) it should be recalled that $\beta ^{\prime }(0)>0$ and according to the implicit function theorem, $\beta (\alpha )$ can be used without loss of generality
as a bifurcation parameter. But for convenience, we will continue to
denote $\alpha$ by $\beta (\alpha )$. Under these assumptions, the last two equations are
rewritten in the form
\begin{equation*}
\begin{array}{l}
x_{1}^{\prime }=\alpha x_{1}-\gamma (\alpha )x_{2}+p\left(
x_{1},x_{2},\alpha \right) \\
x_{2}^{\prime }=\gamma (\alpha )x_{1}+\alpha x_{2}+q\left(
x_{1},x_{2},\alpha \right) .%
\end{array}%
\end{equation*}%
Suppose that $p$ and $q$ admit a Taylor series representation (for this purpose it is sufficient that it is finite with
remainder) such that the above equations can be represented as follows
\begin{equation*}
\begin{array}{l}
x_{1}^{\prime }=\alpha x_{2}-\gamma (\alpha )x_{2}+\Sigma B_{j}^{i}\left(
x_{1},x_{2},\alpha \right) \\
x_{2}^{\prime }=\gamma (\alpha )x_{1}+\alpha x_{2}+\Sigma B_{j}^{i}\left(
x_{2},x_{2},\alpha \right) ,%
\end{array}%
\end{equation*}%
where the $B$ represent polynomials homogeneous of degree $j$ in the variables $x_{1}$ and $x_{2}$, in other words,
for each $j$ the polynomial $B_{j}^{i}$ brings together all the terms of degree $j$ that are obtained in the respective Taylor series
expansions of the functions $p$ and $q$ with respect to the variables $x_{1}$ and $x_{2}$. Now setting $x_{1}=r\cos\theta, \ x_{2}=r\sin \theta $,
to obtain the above equations in polar coordinates:
\begin{equation}
\begin{array}{l}
r^{\prime }=\alpha r+r^{2}C_{2}(\theta ,\alpha )+r^{3}C_{4}(\theta ,\alpha
)+\ldots \\
\theta ^{\prime }=\gamma (\alpha )+rD_{3}(\theta ,\alpha )+r^{2}D_{4}(\theta
,\alpha )+\ldots,
\end{array}
\label{5.18}
\end{equation}
where,
\begin{equation*}
C_{j}(\theta ,\alpha )=(\cos \theta )B_{j-1}^{1}(\cos \theta ,\sin%
\theta ,\alpha )+(\sin\theta )B_{j-1}^{2}(\cos \theta ,\sin%
\theta ,\alpha ),
\end{equation*}
\begin{equation*}
D_{j}(\theta ,\alpha )=(\cos \theta )B_{j-1}^{2}(\cos \theta ,\sin%
\theta ,\alpha )+(\sin\theta )B_{j-1}^{1}(\cos \theta ,\sin%
\theta ,\alpha ).
\end{equation*}
It can be observed that $C_{j}$ and $D_{j}$ are homogeneous polynomials of degree $j$ in $(\cos \theta ,$ sin $\theta )$. We are interested in the periodic solutions of (\ref{5.18}) with the property that
when $\alpha $ tends to zero, then $r$ tends to zero. In this sense, the following change of scale is established
\begin{equation*}
r=\mu \rho ,\quad \alpha =\mu \tilde{\alpha}
\end{equation*}%
in the equation (\ref{5.18}) to obtain
\begin{equation}
\begin{array}{l}
\rho ^{\prime }=\mu \left[ \tilde{\alpha}\varphi +\rho ^{2}C_{3}(\theta ,\mu
\tilde{\alpha})\right] +\mu ^{2}\rho ^{3}C_{4}(\theta ,\mu \tilde{\alpha})+\ldots \\
\theta ^{\prime }=\gamma _{0}+\mu \left[ \tilde{\alpha}\gamma ^{\prime}(0)+\rho D_{3}(\theta ,\mu \tilde{\alpha})\right] +\ldots
\end{array}
\label{5.19}
\end{equation}
Here $\gamma_{0}$ is denoted by the value of $\gamma_(0)$. The equation (\ref{5.19}) does not have exactly the form of the equation (\ref{5.12}), however, it can be seen that the average method can be perfectly applied to the coefficients of $\mu,\mu ^{2},\ldots,$ of the equation corresponding to $\rho ^{\prime}$. Only the coefficients indicated above have to be averaged since the equation of $\theta$ does not provide information on the amplitude of the periodic solution. The generic situation is completely determined by averaging the coefficients of $\mu$ and $\mu^{2}$ of the equation of $r$. To perform the averages, the change of variables is now introduced.
\begin{equation}
\tilde{\rho}=\rho +\mu u_{1}(\rho ,\theta ,\tilde{\alpha},\mu )+\mu
^{2}u_{2}(\rho ,\theta ,\tilde{\alpha},\mu ),  \label{5.20}
\end{equation}
the appearance of $\mu$ in the arguments of $u_{1}$ and $u_{2}$ is due to the fact that $\mu$ now appears in the coefficients of (\ref{5.19}). It is also observed that
\begin{equation*}
\rho =\tilde{\rho}-\mu u_{1}(\tilde{\rho},\theta ,\tilde{\alpha},\mu
)+O\left( \mu ^{2}\right)
\end{equation*}
in agreement with what was demonstrated in Section 4. Now (\ref{5.20}) is substituted in (\ref{5.19}), for convenience the arguments of the functions $u_{1}$ and $u_{2}$ are not included, we obtain:
\begin{eqnarray}
\tilde{\rho}^{\prime } &=&\rho ^{\prime }+\mu \frac{\partial u_{2}}{\partial
\rho }\rho ^{\prime }+\mu \frac{\partial u_{x}}{\partial \theta }\theta
^{\prime }  \notag \\
&=&\mu \left[ \tilde{\alpha}\rho ^{+}\rho ^{2}C_{3}(\theta ,\mu \tilde{\alpha%
})\right] +\mu ^{2}\rho ^{3}C_{4}(\theta ,\mu \widetilde{\alpha })+\mu ^{2}%
\frac{\partial u_{1}}{\partial \rho }\left[ \tilde{\alpha}\rho ^{+}\rho
^{2}C_{3}(\theta ,\mu \tilde{\alpha})\right]  \notag \\
&\qquad +&\mu \frac{\partial u_{2}}{\partial \theta }\left[ \gamma _{0}+\mu
\tilde{\alpha}\gamma ^{\prime }(O)+\mu \rho D_{3}(\theta ,\mu \tilde{\alpha})%
\right] +\mu ^{2}\frac{\partial u_{z}}{\partial \theta }\gamma _{0}+O\left(
\mu ^{3}\right)  \notag \\
&=&\mu \left[ \tilde{\alpha}\rho +\rho ^{2}C_{S}(\theta ,\mu \tilde{\alpha})+%
\frac{\partial u_{2}}{\partial \theta }\gamma _{\theta }\right]  \notag \\
&\qquad +&\mu ^{2}\left\{ \rho ^{3}C_{4}(\theta ,\mu \tilde{\alpha})+\frac{%
\partial u_{2}}{\partial \rho }\left[ \tilde{\alpha}\rho +\rho
^{2}C_{3}(\theta ,\mu \tilde{\alpha})\right] \right.  \notag \\
&\qquad +&\frac{\partial u_{1}}{\partial \theta }\left[ \tilde{\alpha}\gamma
\cdot (0)+\rho D_{s}(\theta ,\mu \tilde{\alpha})\right] +\gamma _{0}\frac{%
\partial u_{2}}{\partial \theta }+O\left( \mu ^{3}\right)  \notag \\
&=&\mu \left[ \tilde{\alpha}\tilde{\rho}+\tilde{\rho}^{2}C_{3}(\theta ,\mu
\tilde{\alpha})+\frac{\partial u_{x}}{\partial \theta }\gamma _{0}\right]
+\mu ^{2}\left\{ \tilde{\rho}^{3}C_{4}(\theta ,\mu \tilde{\alpha})\right.
\notag \\
&\qquad +&\frac{\partial u_{1}}{\partial \rho }\left[ \tilde{\alpha}\tilde{%
\rho}+\tilde{\rho}^{2}C_{3}(\theta ,\mu \tilde{\alpha})\right] +\frac{%
\partial u_{1}}{\partial \theta }\left[ \tilde{\alpha}\gamma \cdot (0)+%
\tilde{\rho}D_{3}(\theta ,\mu \tilde{\alpha})\right]  \notag \\
&\qquad -&u_{1}\left[ \tilde{\alpha}+2\tilde{\rho}C_{s}(\theta ,\mu \tilde{%
\alpha})+\frac{\partial ^{2}u_{x}}{\partial \rho \partial \theta }\gamma
_{0}J+\frac{\partial u_{2}}{\partial \theta }\gamma _{0}{\ }^{3}+O\left( \mu
^{3}\right) .\right.  \notag
\end{eqnarray}%
Following the previous lemma, it can be seen that the function $u_{1}$ is given by
\begin{equation*}
u_{1}(\tilde{\rho},\theta ,\alpha ,\mu )=-\left( \tilde{\rho}^{2}/\gamma
_{0}\right) \int_{0}^{\theta }C_{3}(s,\mu \tilde{\alpha})ds.
\end{equation*}
The other integral that should appear in $u_{1}$ is equal to zero since $C_{3} $ is an homogeneous
trigonometric polynomial of the third degree and therefore has zero average. The coefficient of $p$
in the averaged equation is therefore $\mu \tilde{\rho}$. Now we will search for
$u_{2}$, in this case, it is not necessary to explicitly use the formula of the previous lemma because $u_{1}$
is known and it is also known from the average method that the coefficient that will accompany
$\mu^{2}$ in the averaged equation is the average of the coefficient that accompanies $\mu^{2}$
in the current equation of $\tilde{\rho}$, therefore, the coefficient in question will be
\begin{eqnarray}
R_{2}(\tilde{\rho},\alpha ,\mu ) &=&\mathit{\ average}\text{ }\mathit{of}%
\quad \{\tilde{\rho}^{3}C_{4}+\frac{\partial u_{1}}{\partial \rho }\left[
\tilde{\alpha}\tilde{\rho}+\tilde{\rho}^{2}C_{3}\right]  \notag \\
&\qquad +&\frac{\partial u_{1}}{\partial \theta }\left[ \tilde{\alpha}\gamma
^{\prime }(0)+\tilde{\rho}D_{3}\right] -u_{1}\left[ \tilde{\alpha}+2\tilde{%
\rho}C_{3}+\frac{\partial^{2}u_{1}}{\partial \rho \partial \theta }\gamma
_{0}\right] \}  \notag \\
&=&\mathit{average}\text{ }\mathit{of}\quad \{\tilde{\rho}^{3}C_{4}-\frac{%
\tilde{\rho}^{3}}{\gamma _{0}}C_{3}D_{3}\}  \notag \\
&=&\tilde{\rho}^{3}K.  \notag
\end{eqnarray}
To fix the ideas of what has been exposed in all the above, it is done by summarizing everything said before in the following theorem:
\begin{theorem}
Consider the differential equation
\begin{equation}
\begin {aligned}
\rho^{\prime}&=\mu\left[\tilde{\alpha} \rho+\rho^{2}
C_{3}(\theta, \mu \tilde{\alpha})\right]+\mu^{2} \rho^{-s} C_{4}(\theta, \mu
\tilde{\alpha})+ O \left(\mu^{3}\right)\\
\theta^{\prime}&=\gamma_{0}+\mu\left[\tilde{\alpha} \gamma^{\prime}(0)+\rho
D_{3}(\theta, \mu \tilde{\alpha})\right]+O\left(\mu^{2}\right)
\end{aligned}
\label{5.21}
\end{equation}
which comes from a Hopf bifurcation problem in $\mathbb{R}^{2}$ brought to polar coordinates $(r,\theta )$
and subsequently rescaled by $r=\mu \rho ,\alpha =\mu \tilde{\alpha}$. Then, the change of variables
\begin{equation*}
\tilde{\rho}=\rho +\mu u_{1}(\rho ,\theta ,\tilde{\alpha},\mu )+\mu
^{2}u_{2}(\rho ,\theta ,\tilde{\alpha},\mu ),
\end{equation*}
brings equation (\ref{5.21}) to the averaged form
\begin{equation}
\begin{aligned} \tilde{\rho}^{\prime}&=\mu \tilde{\alpha}
\tilde{\rho}+\mu^{2} \tilde{\rho}^{-s} K+D\left(\mu^{3}\right)\\
\theta^{\prime}&=\gamma_{0}+\mu\left[\tilde{\alpha}
\gamma^{\prime}(0)+\tilde{\rho}^{D} \bar{s}(\theta, \mu
\tilde{\alpha})\right]+D\left(\mu^{2}\right) \end{aligned}  \label{5.22}
\end{equation}%
where $K$ is the constant
\begin{equation}
K=(1/2\pi )\int_{0}^{2\pi }\left\{ C_{4}(\theta ,0)-\left( 1/\gamma
_{0}\right) C_{3}(\theta ,0)D_{3}(\theta ,0)\right\} d\theta .  \label{5.23}
\end{equation}
\end{theorem}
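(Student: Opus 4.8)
The plan is to verify the asserted change of variables by direct substitution, matching equal powers of $\mu$ and letting the preceding lemma furnish the averaging transformation at each order. First I would differentiate $\tilde{\rho}=\rho+\mu u_{1}+\mu^{2}u_{2}$ along the flow of (\ref{5.21}), writing
\[
\tilde{\rho}^{\prime}=\Big(1+\mu\frac{\partial u_{1}}{\partial\rho}+\mu^{2}\frac{\partial u_{2}}{\partial\rho}\Big)\rho^{\prime}+\Big(\mu\frac{\partial u_{1}}{\partial\theta}+\mu^{2}\frac{\partial u_{2}}{\partial\theta}\Big)\theta^{\prime},
\]
and then insert the expansions of $\rho^{\prime}$ and $\theta^{\prime}$ from (\ref{5.21}). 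The coefficient of $\mu$ is $\tilde{\alpha}\rho+\rho^{2}C_{3}+\gamma_{0}\,\partial u_{1}/\partial\theta$; applying the lemma with $w=\gamma_{0}$ and $A=\rho^{2}C_{3}$ forces
\[
u_{1}(\rho,\theta,\tilde{\alpha},\mu)=-\frac{\rho^{2}}{\gamma_{0}}\int_{0}^{\theta}C_{3}(s,\mu\tilde{\alpha})\,ds,
\]
the linear-in-$\theta$ correction term of the lemma dropping out because $C_{3}$ is a homogeneous trigonometric polynomial of odd degree and hence has zero mean. Since that mean vanishes, the averaged coefficient of $\mu$ is exactly $\tilde{\alpha}\tilde{\rho}$, the first term of (\ref{5.22}).

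The substance of the argument is the coefficient of $\mu^{2}$. After inserting the explicit $u_{1}$, this coefficient reads
\[
\rho^{3}C_{4}+\frac{\partial u_{1}}{\partial\rho}\big[\tilde{\alpha}\rho+\rho^{2}C_{3}\big]+\frac{\partial u_{1}}{\partial\theta}\big[\tilde{\alpha}\gamma^{\prime}(0)+\rho D_{3}\big]+\gamma_{0}\frac{\partial u_{2}}{\partial\theta},
\]
to which one must add the further $\mu^{2}$ terms produced when the already-averaged lower-order coefficient $\tilde{\alpha}\rho$ is rewritten in the new variable through the inverse relation $\rho=\tilde{\rho}-\mu u_{1}+O(\mu^{2})$. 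Choosing $u_{2}$ by the lemma once more removes the purely $\theta$-dependent part, so $R_{2}$ is the $\theta$-mean of what remains. I would then evaluate this mean termwise using three elementary facts: $C_{3}$ and $D_{3}$ each have zero mean (odd degree), so every term containing a single unpaired factor $C_{3}$ or $D_{3}$ integrates to zero; the mixed product satisfies $C_{3}\int_{0}^{\theta}C_{3}=\tfrac{1}{2}\frac{d}{d\theta}\big(\int_{0}^{\theta}C_{3}\big)^{2}$, a derivative of a $2\pi$-periodic function and therefore of zero mean; and the only survivors are the even-degree means $\overline{C_{4}}$ and $\overline{C_{3}D_{3}}$. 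This collapses $R_{2}$ to $\tilde{\rho}^{3}\big[\overline{C_{4}}-\gamma_{0}^{-1}\overline{C_{3}D_{3}}\big]=\tilde{\rho}^{3}K$ with $K$ as in (\ref{5.23}). The $\theta^{\prime}$-equation of (\ref{5.22}) follows from the same substitution; its $O(\mu)$ coefficient plays no role in the amplitude and needs no averaging.

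I expect the main obstacle to be the order-$\mu^{2}$ bookkeeping rather than any single estimate. One must track every term produced by the chain rule and, above all, the feedback terms $-u_{1}\big[\tilde{\alpha}+2\tilde{\rho}C_{3}+\cdots\big]$ that arise when $\tilde{\alpha}\rho$ is re-expressed via $\rho=\tilde{\rho}-\mu u_{1}+O(\mu^{2})$. The delicate check is that, after averaging, the several $\tilde{\alpha}$-linear contributions (coming from $\partial u_{1}/\partial\rho$, from $\partial u_{1}/\partial\theta$, and from the feedback terms) combine into a single term proportional to $\tilde{\alpha}\tilde{\rho}^{2}\,\overline{\int_{0}^{\theta}C_{3}}$; along the bifurcating orbit one has $\tilde{\alpha}=O(\mu\tilde{\rho}^{2})$, so this contribution is of higher order in $\mu$ and does not perturb the coefficient $K$ that governs the amplitude and stability. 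Once this absorption is confirmed, the two surviving means $\overline{C_{4}}$ and $\overline{C_{3}D_{3}}$ are computed directly, yielding (\ref{5.23}).
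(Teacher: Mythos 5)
Your proposal follows essentially the same route as the paper's own derivation: differentiate the change of variables along the rescaled polar system, choose $u_{1}=-(\rho^{2}/\gamma_{0})\int_{0}^{\theta}C_{3}(s,\mu\tilde{\alpha})\,ds$ via the averaging lemma (the linear correction term vanishing because $C_{3}$, being an odd-degree homogeneous trigonometric polynomial, has zero mean), and identify $R_{2}$ as the $\theta$-mean of the order-$\mu^{2}$ coefficient, which collapses to $\tilde{\rho}^{3}K$ through the same termwise cancellations, notably $C_{3}\int_{0}^{\theta}C_{3}=\tfrac{1}{2}\frac{d}{d\theta}\bigl(\int_{0}^{\theta}C_{3}\bigr)^{2}$ having zero mean. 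You are in fact more careful than the paper on one point: the paper's computation of $R_{2}$ silently discards the $\tilde{\alpha}$-linear remainder proportional to $\tilde{\alpha}\tilde{\rho}^{2}\,\overline{\int_{0}^{\theta}C_{3}}$ (whose mean is generically nonzero), whereas you flag it and argue it is of higher order along the bifurcating branch --- it can also be eliminated outright by normalizing $u_{1}$ to have zero $\theta$-mean.
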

The theorem that follows analyzes what happens in the generic situation of the procedure described above and is summarized in the previous theorem, which occurs for $K\neq 0$ (of the non-generic case, when $K=0$, details will not be given in this work, but in this situation, then more terms of (\ref{5.21}) should be averaged).
\begin{theorem}
Suppose the constant $K$ defined in (\ref{5.23}) is negative, $\mu =\tilde{\alpha}$ and $\tilde{\rho}$ is close to the value
\begin{equation*}
\tilde{\rho}_{0}=(-K)^{-1/2}.
\end{equation*}
Then all periodic solutions obtained by Hopf bifurcation in the original problem
\begin{equation*}
x^{\prime }=f(x,\alpha ), \ x\in\mathbb{R}^{2}
\end{equation*}
that branch from $r=0,\alpha =0,$ are preserved after rescaling $r=\mu \rho ,\alpha =\mu \tilde{\alpha}$ and averaging leading to equation (\ref{5.22}).
Moreover, around each such solution, the positively invariant annular region can be formed
\begin{equation}
A=\left\{ \tilde{\rho}:(1-\varepsilon )\tilde{\rho}_{0}<\tilde{\rho}%
<(1+\varepsilon )\tilde{\rho}_{\theta }\right\} \label{5.24}
\end{equation}
where, $\varepsilon =\varepsilon (\mu )$ is chosen such that, $\delta\longrightarrow 0$ as $\mu \longrightarrow 0$. If $K$ is positive, similar results will be obtained by taking
$\mu =-\tilde{\alpha}\quad $ and $\tilde{\rho}$
now close to $\tilde{\rho}_{0}=K^{-1/2}$, in this case the annular region A will be negatively invariant.
\end{theorem}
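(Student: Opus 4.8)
The plan is to reduce the averaged system (\ref{5.22}), under the stated substitution, to a scalar autonomous equation at leading order, to exhibit $\tilde{\rho}_0$ as a hyperbolic equilibrium of that equation, and then to combine part 3 of Theorem \ref{Averages} with the uniqueness clause of Theorem \ref{Hopf}; the invariance of the annulus $A$ will follow from a trapping-region argument on its boundaries. The hard part will be controlling the $O(\mu^3)$ remainder relative to the $O(\mu^2\varepsilon)$ boundary flux, which is precisely what forces $\varepsilon$ to depend on $\mu$.

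First I would set $\mu=\tilde{\alpha}$ in the $\tilde{\rho}$-equation of (\ref{5.22}), so that $\mu\tilde{\alpha}=\mu^2$ and
\[
\tilde{\rho}^{\prime}=\mu^{2}\bigl(\tilde{\rho}+K\tilde{\rho}^{3}\bigr)+O(\mu^{3}).
\]
Since $\theta^{\prime}=\gamma_{0}+O(\mu)>0$ for $|\mu|$ small, $\theta$ may be used as the $2\pi$-periodic independent variable, giving
\[
\frac{d\tilde{\rho}}{d\theta}=\frac{\mu^{2}}{\gamma_{0}}\,\tilde{\rho}\bigl(1+K\tilde{\rho}^{2}\bigr)+O(\mu^{3}).
\]
The leading autonomous field $g(\tilde{\rho})=\tfrac{1}{\gamma_{0}}\tilde{\rho}(1+K\tilde{\rho}^{2})$ vanishes at $\tilde{\rho}=0$ and, because $K<0$, also at $\tilde{\rho}_{0}=(-K)^{-1/2}$, where $K\tilde{\rho}_{0}^{2}=-1$. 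Differentiating, $g^{\prime}(\tilde{\rho}_{0})=\tfrac{1}{\gamma_{0}}(1+3K\tilde{\rho}_{0}^{2})=-\tfrac{2}{\gamma_{0}}\neq0$, so $\tilde{\rho}_{0}$ is a hyperbolic, and in fact attracting, equilibrium of the averaged reduced equation.

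Next I would invoke part 3 of Theorem \ref{Averages}, applied to the rescaled system (\ref{5.21}) with $\theta$ as periodic time and the bracket treated as a small $O(\mu^{2})$ perturbation: since $\tilde{\rho}_{0}$ is a hyperbolic equilibrium of the averaged equation, the full system has, for $|\mu|$ small, a unique $2\pi$-periodic orbit near $\tilde{\rho}_{0}$ that inherits its stability and tends to $\tilde{\rho}_{0}$ as $\mu\to0$. Undoing the averaging change of variables (\ref{5.20}) and the rescaling $r=\mu\rho$, $\alpha=\mu\tilde{\alpha}$, this orbit maps back to a closed orbit of $x^{\prime}=f(x,\alpha)$ that shrinks to the origin as $\mu\to0$; by the uniqueness assertion of Theorem \ref{Hopf} it must be precisely the orbit produced by the Hopf bifurcation. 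This establishes that every such Hopf orbit is preserved by the rescaling--averaging procedure.

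Finally, for the annulus I would run a transversality argument on $A=\{(1-\varepsilon)\tilde{\rho}_{0}<\tilde{\rho}<(1+\varepsilon)\tilde{\rho}_{0}\}$. Using $K\tilde{\rho}_{0}^{2}=-1$, on the inner circle
\[
g\bigl((1-\varepsilon)\tilde{\rho}_{0}\bigr)=\frac{(1-\varepsilon)\tilde{\rho}_{0}}{\gamma_{0}}\bigl(1-(1-\varepsilon)^{2}\bigr)=\frac{(1-\varepsilon)\tilde{\rho}_{0}}{\gamma_{0}}\,\varepsilon(2-\varepsilon)>0,
\]
so the leading field points strictly outward, while on the outer circle the same computation yields $-\tfrac{(1+\varepsilon)\tilde{\rho}_{0}}{\gamma_{0}}\varepsilon(2+\varepsilon)<0$, pointing strictly inward. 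Both fluxes are of order $\mu^{2}\varepsilon$, and the essential obstacle is that they must dominate the $O(\mu^{3})$ remainder in the $\theta$-equation; I would therefore choose $\varepsilon=\varepsilon(\mu)$ with $\varepsilon\to0$ but $\varepsilon/\mu\to\infty$ (for instance $\varepsilon=\sqrt{\mu}$), so that $\mu^{2}\varepsilon\gg\mu^{3}$ and the boundary of $A$ remains transversal to the full flow with the vector field pointing into $A$ on both components. Hence $A$ is positively invariant. The case $K>0$ is entirely parallel: taking $\mu=-\tilde{\alpha}$ turns the coefficient of $\tilde{\rho}$ into $-\mu^{2}$, so the reduced field becomes $g(\tilde{\rho})=\tfrac{1}{\gamma_{0}}\tilde{\rho}(-1+K\tilde{\rho}^{2})$ with equilibrium $\tilde{\rho}_{0}=K^{-1/2}$ and $g^{\prime}(\tilde{\rho}_{0})=\tfrac{2}{\gamma_{0}}>0$; the boundary computation then reverses, giving inward flux on the inner circle and outward flux on the outer one, so the flow leaves $A$ in forward time and $A$ is negatively invariant, which completes the argument.
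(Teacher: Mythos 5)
Your proposal is correct and follows the paper's core strategy: the same reduction of (\ref{5.22}) with $\mu=\tilde{\alpha}$ to $\tilde{\rho}^{\prime}=\mu^{2}(\tilde{\rho}+K\tilde{\rho}^{3})+O(\mu^{3})$, the same appeal to the averaging theorem (Theorem \ref{Averages}, part 3) for a unique nearby periodic orbit, and the same boundary-flux trapping argument on the circles $(1\pm\varepsilon)\tilde{\rho}_{0}$. Two points of genuine divergence are worth noting. First, the preservation step: the paper argues \emph{directly} that every Hopf orbit is captured --- on any periodic orbit of (\ref{5.18}) branching from $r=0,\alpha=0$, the derivative $r^{\prime}$ must vanish at some point $(r_{1},\theta_{1})$; rescaling by $\mu=r_{1}/\tilde{\rho}_{0}$ and using $K\tilde{\rho}_{0}^{3}=-\tilde{\rho}_{0}$ forces $\tilde{\alpha}=\mu+O(\mu^{2})$, so every such orbit automatically sits near $\tilde{\rho}_{0}$ in the scale $\mu=\tilde{\alpha}$. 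You instead argue \emph{indirectly}: the averaged orbit pulls back to a closed orbit of $x^{\prime}=f(x,\alpha)$, and the uniqueness clause of Theorem \ref{Hopf} identifies it with the Hopf orbit. Your route is sound but needs one more small step to deliver the literal claim that \emph{all} Hopf orbits are preserved: you must check that the amplitudes of the pulled-back orbits, as $\mu$ varies, sweep out a full interval $(0,c_{0})$, so that no member of the Hopf family $c\mapsto(c,\alpha(c))$ is missed; the paper's argument avoids this because it starts from an arbitrary Hopf orbit rather than from the averaged one. Second, your treatment of $\varepsilon(\mu)$ is more quantitative than the paper's: you state explicitly that the boundary flux $O(\mu^{2}\varepsilon)$ must dominate the remainder $O(\mu^{3})$, hence $\varepsilon/\mu\to\infty$ (e.g. $\varepsilon=\sqrt{\mu}$), whereas the paper only says $\varepsilon(\mu)$ "can be appropriately selected"; this is a useful sharpening. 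Two minor slips: the $O(\mu^{3})$ remainder you must beat lives in the $\tilde{\rho}$-equation, not the $\theta$-equation; and note that the paper's own inequality (ii) on the inner circle carries a sign typo (it should read $\tilde{\rho}^{\prime}>0$ there, as your computation correctly gives).
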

\begin{proof}
Since $\mu =\tilde{\alpha}$, in terms of $\mu ,$equation (\ref{5.22}) takes the form
\begin{eqnarray}
\tilde{\rho}^{\prime } &=&\mu ^{2}\left( \tilde{\rho}+\tilde{\rho}^{3}K\right) +O(\mu ^{3}) \notag \\
\theta ^{\prime } &=&\gamma _{0}+O(\mu ^{2}), \notag
\end{eqnarray}
According to Theorem 8 (of integral averages) and the equation of $\tilde{\rho}$ it follows that there exists a (unique) periodic solution in a
neighborhood of $\tilde{\rho}_{0}=(-K)^{-1/2}$. To see that the periodic solutions of the original problem are preserved after rescaling, consider any periodic solution of (\ref{5.18}) (which still preserves the initial scale) that branches off from $r=0,\alpha =0$, then for some point $(r_{1},\theta _{1})$ of the solution in question $r^{\prime }$ must vanish, due to periodicity. After rescaling by $\mu=r_{1}/\tilde{\rho}_{0}$, $r^{\prime }$ must vanish at the point with coordinates $(\tilde{\rho}_{0},\theta _{1})$. Therefore,
\begin{equation*}
0=\mu \alpha \tilde{\rho}_{0}+\mu ^{2}\rho ^{3}K+O\left( \mu ^{3}\right)
=\mu \tilde{\rho}_{0}(\tilde{\alpha}-\mu +O\left( \mu ^{2}\right) ).
\end{equation*}
For the second equality, the value of $\tilde{\rho}_{0}=(-K)^{-1/2}$ has been taken into account, \ and it is concluded from the last equality that
\begin{equation*}
\tilde{\alpha}=\mu +O\left( \mu ^{2}\right)
\end{equation*}
and for this reason, for $\mu =\tilde{\alpha},$ all the original periodic solutions prevail in a neighborhood of $\tilde{\rho}_{0}$.
Consider now the annular region defined by (\ref{5.24}) , from the
following conditions, it can be seen that it is possible to appropriately select a function $\varepsilon =\varepsilon (\mu )$ such that $A$ is positively invariant:
i) For $\tilde{\rho}=(1+\delta )\tilde{\rho}_{0}$, we have that
\begin{equation*}
\tilde{\rho}^{\prime }=\mu ^{2}(1+\delta )\tilde{\rho}_{0}[(\tilde{\alpha}/\mu )-(1+\delta )^{2}+O(\mu )]<0.
\end{equation*}
ii) For $\tilde{\rho}=(1-\delta )\tilde{\rho}_{0},$we have that
\begin{equation*}
\tilde{\rho}^{\prime }=\mu ^{2}(1-\delta )\tilde{\rho}_{0}\left[ (\tilde{\alpha}/\mu )-(1-\delta )^{2}+O(\mu )\right] <0.
\end{equation*}
To see the inequality, it must be noted that, $\tilde{\alpha}/\mu=1+O(\mu )$. From i) and ii) it is also inferred that the periodic solution is located entirely in the annular region $A$.
\end{proof}
\section{Application of the Method}
The results of the previous sections will be applied to the two-predator, one-prey ecological model given in
\cite{HHW}. A detailed study of this model can be found in \cite{CavaniMaster} and a generalization of this model can be found in
\cite{CavaniRD}. So, the model is expressed by the following system of ordinary differential equations:
\begin{eqnarray}
S^{\prime } &=&\gamma S\left( 1-\frac{S}{k}\right) -\frac{m_{1}x_{1}S}{a_{1}+S}-\frac{m_{2}x_{2}S}{a_{2}+S} \notag \\
x_{1}^{\prime } &=&\frac{m_{1}x_{1}S}{a_{1}+S}-d_{1}x_{1} \label{6.1} \\
x_{2}^{\prime } &=&\frac{m_{2}x_{2}S}{a_{2}+S}-d_{2}x_{2}, \notag
\end{eqnarray}
where, $S$ represents the density of the prey population, $x_{1}$ and $x_{2}$ represent the densities of the predator populations; as can be seen,
the prey population shows a logistic growth in the absence of predators and the trophic function or performance response follows the well-known Michaelis-Menten kinetic;
$\gamma >0$ is the intrinsic growth rate of the prey, $k>0$ is the environmental saturation constant of the prey; $m_{i}>0,$ $d_{i}>0,$ $a_{i}>0$ are the maximum birth rate,
the death rate, and the "semi-saturation constant" of the $i$-th predator respectively for $i=1.2$; the apostrophe indicates derivation for time $t$.
The meaning of the semi-saturation constant is that for $S=a_{i}$
the performance response of the $i$-th predator is equal to $m_{i}/2$, half the maximum birth rate.
It can be seen that the equation for the $i$-th predator vanishes when the pair $\left( S,x_{1}\right) =(0,0)$
or else when $S=a_{i}d_{i}/\left( m_{i}-d_{i}\right) ,\quad i=1,2. $ Define,
\begin{equation*}
\lambda _{i}=\frac{a_{i}d_{i}}{m_{i}-d_{i}},\quad i=1,2.
\end{equation*}
In \cite{HHW} it is shown that the solutions of (\ref{6.1}) corresponding to positive initial conditions are bounded and remain in the first octant, also
they showed that a necessary condition for the survival of the $i$-th predator is $0<\lambda _{i}<k$. The authors mentioned dealt
mainly with the generic case of the model, that is, when $\lambda _{1}\neq \lambda _{2}$.
The non-generic case $\lambda _{1}=\lambda _{2}$, which can be interpreted as predators being equally voracious, has been studied in
\cite{HalSmith} and in \cite{MFarkas}. In the present work, the non-generic case $\lambda _{1}=\lambda _{2,}$ is studied but under the condition
non-generic $a_{1}=a_{2}$. Specifically, we are interested in periodic solutions that come from Hopf bifurcation to apply the procedures of Section 5.
The following notations will facilitate the study of the problem:
\begin{itemize}
\item N. 1) It is assumed that $\lambda =\lambda _{1}=\lambda _{2,}\quad
a=a_{1}=a_{2}.$
\item N. 2) We denote $b_{i}=m_{i}/d_{i},i=1,2$.
This notation allows us to write $\lambda =a/\left( b_{i}-1\right) ,$ from this, it can be seen that, $b_{1}=b_{2}$. Therefore, from now on, we will assume that
\begin{equation*}
b=b_{1}=b_{2}.
\end{equation*}
\item N.3) From the equality $b_{1}=b_{2},$ the following follows: $%
d_{1}/d_{2}=m_{1}/m_{2}.$ The common quantity
is denoted by $\rho =d_{1}/d_{2}=m_{1}/m_{2}=$. With this notation can not be put
\begin{equation*}
m_{2}=\rho \;m_{1}.
\end{equation*}
\item N. 4) It is denoted by $\beta _{i}=m_{i}-d_{i},$ $i=1,2.$ It is observed that,
\begin{equation*}
m_{1}-d_{1}=d_{1}\left[ \left( m_{1}/d_{1}\right) -1\right] =d_{1}\left(
b_{1}-1\right)
\end{equation*}
for this reason, too $\rho =\beta _{2}/\beta _{1}$ and $\beta _{2}=\rho \;\beta 1.$
\end{itemize}
Hereinafter it is also assumed that the following hypothesis is fulfilled:
\begin{equation*}
\text{\textbf{\ H)}\qquad \qquad }b>1, 0<\lambda <k, \text{and}\rho \geq 1.
\end{equation*}
With the above notations, the system (\ref{6.1}) becomes the following:
\begin{eqnarray}
S^{\prime } &=&\gamma S\left( 1-\frac{S}{k}\right) -\left( x_{1}+\rho
x_{2}\right) \frac{m_{1}s}{a+s} \notag \\
x_{1}^{\prime } &=&\beta _{1}x _{1}\frac{(s-\lambda )}{a+s} \label{6.2}
\\
x_{2}^{\prime } &=&\rho \beta _{1}x_{2}\frac{(s-\lambda )}{a+s}. \notag
\end{eqnarray}
The equilibrium points of the system (\ref{6.2}) are: $(0,0,0)$, $(k,0,0)$ and
all points that lie on the straight-line segment
\begin{equation*}
L=\{(S,x_{1}x_{2})\in \mathbb{R}^{3}:S=\lambda ,x_{1}+\rho x_{2}=\gamma (a+\lambda )(k-\lambda
)/m_{1}k,x_{1},x_{2}\geq 0\}
\end{equation*}
Using the linearization analysis of the system at the points $(0,0,0)$ and $(k,0,0)$ it is easily demonstrated
that the equilibrium points are unstable (the calculations to be performed are practically the same as those in
\cite{CavaniMaster}.
For the analysis of the stability of the points of the segment $L$, the situation is somewhat more complicated,
since it is a continuum of equilibrium points. According to the procedure in \cite{HalSmith}, the parameter $t$
is eliminated from the last two equations of (\ref{6.2}) and gives,
\begin{equation*}
\frac{dx_{2}}{dx_{1}}=\rho \frac{x_{2}}{x_{1}}
\end{equation*}%
This implies that, $Ln\left( x_{2}\right) = Ln\left( x_{1}^{\rho
}\right) + Ln(c),c\geq 0$. Then,
\begin{equation}
x_{2}=cx_{1}^{\rho }. \label{6.3}
\end{equation}
Now, sketch the system (\ref{6.2}) restricted to the invariant variety (\ref{6.3}) and parameterized by $S$ and ${x}_{1}$, i.e.
\begin{equation}
\begin{aligned} S^{\prime}&=\gamma
S\left(1-\frac{S}{k}\right)-\left(x_{1}+\rho c x_1^{\rho}\right) \frac{m_{1}
S}{a+S}\\ x_{1}^{\prime}&=\beta_{1} x_{1} \frac{S-\lambda}{a+S}.
\end{aligned} \label{6.4}
\end{equation}
The equilibrium points of (\ref{6.4}) are: $(0,0),(k,0)$ and the point $Q$
of the line segment $L$ in common with the variety (\ref{6.3}). To be more
specific, regarding the point $Q$, note the following: if we denote by $Q^{\ast }=\left( \lambda ,\xi _{1},\xi
_{2}\right) ,$ the intersection point of $L$ with the manifold (\ref{6.3}), where,
\begin{equation*}
\xi _{1}+\rho \xi _{2}=\gamma (a+\lambda )(k-\lambda )/m_{1}k\quad and\quad
\xi _{2}=c\xi _{1}^{\rho }
\end{equation*}
then the coordinates of $Q$ are $\left( \lambda ,\xi _{1}\right)$, where
$\xi _{1}$ satisfies the following relation:
\begin{equation}
\xi _{1}+\rho c\xi _{1}^{\rho }=\gamma (a+\lambda )(k-\lambda )/m_{1}k.
\label{6.5}
\end{equation}
The second coordinate of $Q$ is a function of $c$, as well as the other
parameters that appear in the relation (\ref{6.5}). In what follows all the parameters appearing in the equation
(\ref{6.4}) will be considered fixed except $k$, which plays the role of a bifurcation parameter.
According to this $\xi _{1}\equiv \xi _{1}(c,k)$. In M. Farkas (1984) the following basic facts are
proven:\newline
\textbf{F.1)} The points$(0,0)$ and $(k,0)$ are unstable\newline
\textbf{F.2)} If $\lambda <k<a+2\lambda $, the equilibrium point $\left(\lambda ,\xi _{1}\right) $
is asymptotically stable.\newline
\textbf{F.3)} If $k>a+2\lambda ,$ the equilibrium point $\left( \lambda
,\xi _{1}\right) $ is unstable.\newline
\textbf{F.4)} For $k=a+2\lambda $, the system (\ref{6.4}) presents the hypotheses of the Bifurcation Theorem
of Hopf.\newline
According to \textbf{F.4} for $k=a+2\lambda ,$ the equilibrium point $%
\left( \lambda ,\xi _{1}\right) $ branches into a periodic orbit. Now we can apply the results of Section 5. We will obtain a certain region of parameters in which the
periodic orbit obtained from the Hopf Theorem is asymptotically orbitally stable. The region of parameters that we will obtain
is more restricted than that obtained by M. Farkas (1984), but it is improved in the sense that we can now establish a concrete region in which the
orbit is located. All the results follow as a consequence of the positive invariance of the ring region $A$ referred to in Theorem 5.3.
To begin with, in the equation (\ref{6.4}), consider
The following change of variables:
\begin{equation*}
y_{1}=S-\lambda ,\quad y_{2}=x_{1}-\xi _{1}.
\end{equation*}%
We obtain,
\begin{equation}
\begin{aligned}
y_{1}^{\prime}&=\gamma\left(y_{1}+\lambda\right)\left(1-\frac{y_{1}+%
\lambda}{k}\right)\\
&-\frac{m_{1}\left(y_{1}+\lambda\right)}{a+\lambda+y_{1}}
\left[y_{2}+\xi_{1}+\frac{\gamma(a+\lambda)(1-\lambda / k)-m_{1}
\xi_{1}}{m_{1}}\left(y_{2}+\xi_{1}\right)^{\rho}\right]\\
y_{2}^{\prime}&=\beta_{1}\left(y_{2}+\xi_{1}\right)
\frac{y_{1}}{a+\lambda+y_{1}}, \end{aligned}  \label{6.6}
\end{equation}
to obtain the equation for $y_{1}$, was substituted $\rho c$, using the equality (\ref{6.5}).
Now, getting the right hand sides of (\ref{6.6}) in Taylor series,
around of $\left( y_{1},y_{2}\right) =(0,0)$ and for $k=a+2\lambda,$ the following equations are obtained:
\begin{equation}
\begin{aligned} y_{1}^{\prime}=&-\frac{\lambda
m_{1}}{a+\lambda}\left(1-\rho+\frac{\rho \gamma (a+\lambda)^2}{m_{1}
\xi_{1}(a+2 \lambda)}\right) y_2\\ &-\frac{\gamma \lambda}{(a+\lambda)(a+2
\lambda)} y_{1}^{2}\\ &-\frac{a m_1}{(a+\lambda)^{2}}\left(1-\rho+\frac{\rho
\gamma(a+\lambda)^2}{m_{1} \xi_{1}(a+2 \lambda)}\right) y_{1} y_{2}\\
&\left.-\frac{\lambda \rho(\rho-1)}{(a+\lambda) 2 \xi_{1}^{2}}
\frac{\gamma(a+\lambda)^{2}}{a+2 \lambda}-m_{1} \xi_{1}\right) y_{2}^{2}\\
&-\frac{a\gamma}{(a+\lambda)^{2}(a+2 \lambda)} y_{1}^{3}\\ &+\frac{a
m_{2}}{(a+\lambda)^{3}}\left(1-\rho+\frac{\rho_{1}}{m_{1} \xi_1 (a+2
\lambda)}\right) y_{1}^{2} y_2\\ &-\frac{a \rho(\rho-1)}{(a+\lambda)^{2} 2
\xi_{1}^{2}}\left(\frac{r(a+\lambda)^{2}}{a+2 \lambda}-m_{1} \xi_{1}\right)
y_{1} y_{2}^{2}\\ &-\frac{\lambda \rho(\rho-1)(\rho-2)}{b
\xi_{1}^{2}}\left(\frac{\gamma(a+\lambda)^{2}}{a+2 \lambda}-m_{1}
\xi_{1}\right) y_{2}^{3}+\cdots \end{aligned}  \label{6.7a}
\end{equation}
\begin{equation}
\begin{aligned} y_{2}^{\prime}=&-\frac{\beta_{1} \xi_{1}}{a+\lambda} y_{1}\\
&-\frac{\beta_{1} \xi_{1}}{(a+\lambda)^2} y_{1}^{2}\\
&+\frac{\beta_{1}}{a+\lambda} y_{1} y_{2} \\ &+\frac{\beta_{1}
\xi_{1}}{(a+\lambda)^{3}} y_1^{3}-\frac{\beta_{1}}{(a+\lambda)^{2}}
y_{1}^{2} y_{2}+\ldots \end{aligned}  \label{6.7b}
\end{equation}%
Equations (\ref{6.7a}) and (\ref{6.7b}) do not have the canonical Jordan form, but it is known
(from matrix theory in Jordan form) that there exists a transformation of variables $\mathbb {T}$ from
$\mathbb{R}^{2}$ to $\mathbb{R}^{2}$ such that if $col (y_{1},y_{2})=\mathbb{T}(col(u,v))$, then the above equations take the form
\begin{equation}
u^{\prime }=w(a+2\lambda )v+g_{1}(u,v),\quad v^{\prime }=-w(a+2\lambda
)+g_{2}(u,v), \label{6.8}
\end{equation}%
where, $\pm i\omega (a+2\lambda )$, are the eigenvalues ??of the system
linearized from (\ref{6.7a})-\ref{6.7b}), where
\begin{equation*}
w(a+2\lambda )=\left[ \lambda m_{1}\beta _{1}\xi _{1}\left( 1-\rho +\rho
\gamma (a+\lambda )2/m_{1}\xi _{1}(a+2\lambda )\right) \right]
^{1/2}/(a+\lambda ).
\end{equation*}%
To specify about the transformation $\mathbb {T}$, think the system (\ref{6.6}) in the form
\begin{equation}
y^{\prime }=A(k)y+F(y), y\in \mathbb{R}^{2} \label{6.9}
\end{equation}
then the transformation $\mathbb{T}\equiv \mathbb{T}(k)$ is given by:
\begin{equation*}
\mathbb{T}=\left[
\begin{array}{ll}
1 & p(k) \\
0 & q(k)
\end{array}
\right],p(k)=\left(\alpha (k)-a_{1}{\ }_{1}(k)\right)
/w(k),\;q(k)=-a_{21}(k)/w(k)
\end{equation*}
where, $\alpha(k)\pm iw(k)$ are the eigenvalues of  the matrix $2\times 2,$ $A(k)$ and $a_{11}$, $a_{21}$
are two of its elements. In this case, we have $k=a+2\lambda ,$ from (\ref{6.7a})-(\ref{6.7b})
it is observed that $a_{11}=0$ and from {\textbf{F}.4},
\begin{equation*}
\alpha (a+2\lambda )=0\quad {\mathit{and}}\quad w(a+2\lambda )\neq 0,
\end{equation*}
well, they are part of the hypothesis of Hopf's Theorem. Therefore,
\begin{equation*}
p(a+2\lambda )=0\quad \mathit{and}\quad q(a+2\lambda )=-\beta _{1}\xi
_{1}/(a+\lambda )w(a+2\lambda ).
\end{equation*}
Putting: $q=q(a+2\lambda )$ and $w=w(a+2\lambda )$, it can be summarized that
\begin{equation*}
\mathbb{T}=\left[
\begin{array}{ll}
1 & 0 \\
0 &q%
\end{array}%
\right] ,\quad {\mathbb{T}}^{-1}=\left[
\begin{array}{ll}
1 & 0 \\
0 & 1/q%
\end{array}%
\right],
\end{equation*}%
and the change of variables is:
\begin{equation*}
\left[
\begin{array}{c}
y_{1} \\
y2%
\end{array}%
\right] =\mathbb{T}\left[
\begin{array}{l}
or \\
v
\end{array}
\right] =\left[
\begin{array}{ll}
1 & 0 \\
0 &q%
\end{array}%
\right] \left[
\begin{array}{l}
u \\
v
\end{array}%
\right] =\left[
\begin{array}{l}
u \\
qv
\end{array}
\right],
\end{equation*}
that is to say,
\begin{equation}
y_{1}=u,\quad y_{z}=qv. \label{6.10}
\end{equation}
Under the change of variables (\ref{6.10}) the system (\ref{6.7a})-(\ref{6.7b}) takes
the form (\ref{6.8}) and it can be noted that in this equation the linear part has the canonical Jordan form.
To determine the non-linear part of (\ref{6.8}), one must
take into consideration the non-linear terms of (\ref{6.7a})-(\ref{6.7b}) which under the representation (\ref{6.9}) can easily be seen that
\begin{equation*}
\left[
\begin{array}{l}
g_{1}\left(u,v\right) \\
g_{2}(u,v)
\end{array}
\right] ={\mathbb{T}}^{-1}\quad F\left( \mathbb{T}\left[
\begin{array}{l}
u \\
v
\end{array}
\right] \right).
\end{equation*}
This calculation is performed below. For convenience, in (\ref{6.7a})-(\ref{6.7b}), regardless of their signs,
the coefficients of the non-linear part of $y_{1}$ are designated by $\bar{a}_{i}$ and those of $y_{2}$ by $\bar{b}_{i}$.
So, putting $\mathtt{B}=F(\mathbb{T}(col (u,v))),\;\mathtt{C}={\mathbb{T}}^{-1}F(\mathbb{T}(col(u,v)))$ and taking into account (\ref{6.10}), we have to
\begin{equation*}
\mathtt{B}=\left[
\begin{array}{c}
\left. -\bar{a}_{1}u^{2}-\bar{a}_{2}quv-\bar{a}_{3}q^{2}v^{2}-\bar{a}
_{4}u^{3}+\bar{a}_{5}qu^{2}v-\bar{a}_{6}q^{2}uv^{2}-\bar{a}%
_{7}q^{3}v^{3}+\ldots \right] \\
-\bar{b}_{1}u^{2}+\bar{b}_{2}quv-\bar{b}_{3}u^{3}-\bar{b}_{4}u^{2}v+\ldots
\end{array}
\right.
\end{equation*}
and
\begin{equation*}
\mathtt{C}=\left[
\begin{array}{rl}
-\bar{a}_{1}u^{2}-\bar{a}_{2}quv-\bar{a}_{3}q^{2}v^{2}-\bar{a}_{4}u^{3}+\bar{%
a}_{5}qu^{2}v-\bar{a}_{6}q^{2}uv^{2}-\bar{a}_{7}q^{3}v^{3}+\ldots & \\
-\left( \bar{b}_{1}/q\right) u^{2}+\bar{b}_{2}uv-\left( \bar{b}_{3}/q\right)
u^{3}-\left( \bar{b}_{4}/q\right) u^{2}v+\ldots &
\end{array}%
\right].
\end{equation*}%
The vector $\mathtt{C}$ is the nonlinear part of (\ref{6.8}). With the
results of the previous calculations, rewrite (\ref{6.8}) as
\begin{equation}
\begin{aligned} v^{\prime}=-w u-\left(\bar{b}_{1} / q\right)
u^{z}+\bar{b}_{2} u v-\left(\bar{b}_{3} / q\right) u^{3}-\left(\bar{b}_{4} /
q\right) u^{2} v+\ldots \\ v^{\prime}=-w u-\left(\bar{b}_{1} / q\right)
u^{2}+\bar{b}_{2} u v-\left(\bar{b}_{3} / q\right) u^{3}-\left(\bar{b}_{4} /
q\right) u^{2} v+\ldots \end{aligned} \label{6.11}
\end{equation}
With $u=r\;cos\;\theta $ and\ $v=r\;sin$ $\theta $, we pass to an equation
with polar coordinates as (\ref{5.18}). So, the following system is obtained:
\begin{equation}
\begin{aligned} r^{\prime}&=r^{2}\left[-\bar{a}_{1} \cos ^{3}
\theta-\left(\bar{a}_{2} q+\bar{b}_{2} / q\right) \cos ^{2} \theta
\operatorname{sin} \theta+\left(\bar{b}_{2}-\bar{a}_{3}\right)
\operatorname{sin}^{2} \theta \cos \theta\right]\\ &\quad
+r^{3}\left[-\bar{a}_{4} \cos ^{4} \theta+\left(\bar{a}_{5} q+\bar{b}_{3} /
q\right) \cos ^{3} \theta \operatorname{sin} \theta-\left(\bar{a}_{6}
q^{2}+\bar{b}_{4} / q\right) \cos ^{2} \theta \operatorname{sin}^{2}
\theta\right. \\ &\quad \left.-\bar{a}_{7} q^{3} \operatorname{sin}^{3}
\theta \cos \theta\right]+O\left(r^{4}\right)\\
\theta^{\prime}&=-w+r[-\left(\bar{b}_{1} / q\right) \cos ^{3}
\theta-\bar{a}_3q^{2} \operatorname{sin}^{3} \theta-\bar{a}_{2} q \cos
\theta \operatorname{sin}^{2} \theta \\ &\quad
+\left(\bar{b}_2-\bar{a}_{1}\right) \cos ^{2} \theta \operatorname{sin}
\theta ]+O\left(r^{2}\right) \end{aligned}  \label{6.12}
\end{equation}
To apply Theorem 10, the equations (\ref{6.12}) are sufficient, from which
all the data can be collected to calculate the constant $K$ referred to in
said theorem. In this case,
\begin{equation*}
K=(1/2\pi )\int_{0}^{2\pi }\left[ C_{4}(\theta ,a+2\lambda
)+(1/w)C_{3}(\theta ,a+2\lambda )D_{3}(\theta ,a+2\lambda )\right] d\theta
\end{equation*}%
where, it has been taken from (\ref{6.12}) and in accordance with what is
established in Theorem 10: $C_{3}(\theta ,a+2\lambda )$ and $C_{4}(\theta
,a+2\lambda )$ are the respective coefficients of $r^{2}$ and of $r^{3}$ in
the equation for $r$, and $D_{3}(\theta ,a+2\lambda )$, the coefficient of $%
r $ in the equation for $\theta $. By evaluating the integrals that result
from substituting the coefficients indicated above in the expression for $K$%
, we obtain:
\begin{equation*}
\begin{aligned} K=&-(3 / 8) \bar{a}_{4}-(1 / 8) \bar{a}_{6} q^{2}-(1 / 8 q)
\bar{b}_{4}+(1 / w q) \bar{a}_{1} \bar{b}_{1}+(1 / w) \bar{a}_{1}
\bar{a}_{2} q \\ &-(1 / w) \bar{a}_2 \bar{b}_{2} q-(1 / w q) \bar{b}_{1}
\bar{b}_{2}+(1 / 2 w q) \bar{a}_{3} \bar{b}_{1}+(1 / 2 w) \bar{a}_{2}
\bar{a}_{3} q\left(1+q^{2}\right) \end{aligned}
\end{equation*}
Replacing the value $q=-\beta _{1}\xi _{1}/(a+\lambda )w$, in the previous
expression gives
\begin{equation*}
\begin{aligned} 8 w^{2} K=&-3 \bar{a}_{4} w^{2}-\beta_{1} \xi_{1}
\bar{a}_{6} /(a+\lambda)=+(a+\lambda) \bar{b}_{4} w^{3} / \beta_{1} \xi_{1}
\\ &-(a+\lambda) \bar{a}_{1} \bar{b}_{1} w^{2} / \beta_{1} \xi_{1}-\beta_{1}
\xi_{1} \bar{a}_{1} \bar{a}_{2} /(a+\lambda) \\ &+\beta_{1} \xi_{1}
\bar{a}_{2} \bar{b}_{2} /(a+\lambda)+(a+\lambda) \bar{b}_{1} \bar{b}_{2}
w^{2} / \beta_{2} \xi_{2} \\ &-(a+\lambda) \bar{a}_{3} \bar{b}_{1} w^{2} / 2
\beta_{1} \xi_{1}-\beta_{1} \xi_{1} \bar{a}_{2} \bar{a}_{3} / 2(a+\lambda)
\\ &-\beta_{1} \xi_{1} \bar{a}_{2} \bar{a}_{3} / 2(a+\lambda)^3w^2,
\end{aligned}
\end{equation*}%
now, replacing the values of $\bar{a}_{1},\bar{a}_{2},\bar{a}_{3},\bar{a}%
_{4},\bar{a}_{6},\bar{b}_{1},\bar{b}_{2},\bar{b}_{4},w^{2}$, and putting
\begin{equation*}
h_{1}\equiv 1-\rho +\rho ^{\prime }(a+\lambda )z/m_{1}\xi _{1}(a+2\lambda
),h_{2}\equiv \gamma (a+\lambda )z/(a+2\lambda )-m_{2}\xi _{1},
\end{equation*}%
then according with equations (\ref{6.7a})-(\ref{6.7b}), that contain $\bar{a%
}_{2}$, $\bar{a}_{3}$ and $\bar{a}_{6}$, turn out to be positive. So, by (%
\ref{6.5}), the following expression is obtained:
\begin{equation}
\begin{aligned} \left[8 w^{2}(a+\lambda)^4 / h_{1}\right] K=&-4 a \gamma
\lambda m_1 \beta_{1} \xi_{1} /(a+2 \lambda)-2 a \rho\left(\rho -1\right)
\beta_{1} h_{2} / h_{1} \\ &+(a+\lambda) \beta_{1} m_{1} \xi_{1} w-\gamma
\lambda^{2} m_{1} \beta_{1} \xi_{1} /(a+2 \lambda) \\ &+m_{1} \beta_{1}
\xi_{1}\left(a+\lambda_{\beta_{1}}\right) \\ &-(1 / 4) m_{1} \beta_{1}
\xi_{1} \rho (\rho -1)\left(\lambda h_{2}+a\right). \end{aligned}
\label{6.13}
\end{equation}%
As can be seen, $K$ has the same sign as the expression on the right side of (%
\ref{6.13}), and therefore, $K$ will be negative if and only if $%
w<G_{1}/G_{2}$, where
\begin{equation*}
\begin{aligned} G_{1}=& 4 \gamma \lambda m_{2} \beta_{1} \xi_{2}(a+\lambda)
/(a+2 \lambda)+2 a \rho(\rho-1) \beta_{1} h_{z} / h_{1} \\ &+m_{2} \beta_{1}
\xi_{1}\left(a+\lambda \beta_{1}\right) \\ &-(1 / 4) m_{1} \beta_{2}
\rho\left(\rho^{-1)}\left(\lambda h_{z}+a\right)\right. \end{aligned}
\end{equation*}%
and
\begin{equation*}
G_{2}=(a+\lambda )m_{1}\beta _{1}\xi _{1}.
\end{equation*}%
In that case, the $2\pi $-periodic orbit of (\ref{6.12}) (and therefore the
periodic solution of the original problem (\ref{6.4}) is orbitally asymptotically
stable. Furthermore, using Theorem 11 it is possible to construct a
positively invariant annular region in which the orbit is located. All the
previous calculations and results are summarized in the following theorem:
\begin{theorem}
Consider the equations (\ref{6.4}) defined on the variety (\ref{6.3}). For $%
k=a+2\lambda $, the critical point of the system with strictly positive
coordinates, $(\lambda ,\xi _{1})$ bifurcates, according to Hopf's Theorem,
into a periodic orbit that is asymptotically stable orbital if,
\begin{equation*}
w<G_{1}/G_{2}
\end{equation*}%
and unstable if,
\begin{equation*}
w>G_{1}/G_{2}
\end{equation*}%
respectively. The periodic orbit is located in a region homeomorphic to the
annular region given by Theorem 11 taking the value of $K$ obtained by means
of (\ref{6.13}).
\end{theorem}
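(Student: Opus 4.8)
The plan is to treat this as a synthesis theorem that assembles the machinery of Sections 3--5 into a concrete conclusion about system (\ref{6.4}). First I would record that the hypotheses of the Hopf Bifurcation Theorem (Theorem~\ref{Hopf}) are met at $k=a+2\lambda$: this is precisely fact \textbf{F.4}, which guarantees that the linearization of (\ref{6.4}) at $(\lambda,\xi_1)$ has a pair of purely imaginary eigenvalues $\pm i\,w(a+2\lambda)$, while the change of stability recorded in \textbf{F.2}--\textbf{F.3} supplies the transversality $\beta'(0)>0$ as $k$ crosses $a+2\lambda$. Hence a one-parameter family of periodic orbits emanates from $(\lambda,\xi_1)$. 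The reduction to the two-dimensional system on the invariant manifold (\ref{6.3}), the translation to the equilibrium, the Taylor expansion (\ref{6.7a})--(\ref{6.7b}), the linear change $\mathbb{T}$ bringing the linear part to Jordan form (\ref{6.8}), and the passage to polar coordinates have already produced system (\ref{6.12}); these steps I would simply cite as completed.

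Next I would invoke Theorem 10 applied to (\ref{6.12}): after rescaling $r=\mu\rho$, $\alpha=\mu\tilde\alpha$ and averaging, the amplitude is governed by the single constant $K$ of (\ref{5.23}), whose explicit evaluation for the present coefficients is (\ref{6.13}). Theorem 11 then furnishes the stability dichotomy together with the invariant annular region: when $K<0$ the averaged system has a periodic solution near $\tilde\rho_0=(-K)^{-1/2}$ that is orbitally asymptotically stable and enclosed by the positively invariant annulus $A$ of (\ref{5.24}), while when $K>0$ the same construction yields a negatively invariant annulus and hence instability. Since every change of variables used---the manifold restriction, the affine translation, the invertible linear map $\mathbb{T}$, polar coordinates, and the scaling---is a homeomorphism, the region $A$ pulls back to a region homeomorphic to an annulus around the periodic orbit in the original $(S,x_1)$ coordinates, giving the final location claim.

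It then remains to convert the sign of $K$ into the inequality $w\lessgtr G_1/G_2$. Here I would read off (\ref{6.13}): the prefactor $8w^2(a+\lambda)^4/h_1$ multiplying $K$ is strictly positive, because $w^2>0$ under the Hopf hypothesis and $h_1>0$, the latter being the same positivity already used to guarantee that $\bar a_2,\bar a_3,\bar a_6$ are positive. Hence $K$ has the same sign as the right-hand side of (\ref{6.13}), which is affine in $w$: the unique $w$-term has coefficient $G_2=(a+\lambda)m_1\beta_1\xi_1>0$ and the remaining $w$-independent terms collect into $-G_1$, so the expression equals $G_2\,w-G_1$. Because the slope $G_2$ is positive, $K<0\iff G_2 w-G_1<0\iff w<G_1/G_2$, and symmetrically $K>0\iff w>G_1/G_2$. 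Combining with Theorem 11 yields orbital asymptotic stability in the first case and instability in the second.

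The main obstacle is not the logical assembly but the algebra feeding (\ref{6.13}): evaluating the integral (\ref{5.23}) over $[0,2\pi]$ for the trigonometric polynomials $C_3,C_4,D_3$ read from (\ref{6.12}), substituting the explicit Taylor coefficients $\bar a_i,\bar b_i$, the value of $q$, and $w^2$, and finally simplifying with the equilibrium relation (\ref{6.5}) so that the $w$-dependence collapses to the single linear term identified above. A secondary care point is the sign bookkeeping for the auxiliary quantities $h_1$ and $h_2$ under hypothesis \textbf{H)} ($b>1$, $0<\lambda<k$, $\rho\ge1$), which licenses both the positivity of the prefactor and the clean reading of $G_1,G_2$; and one must check that the composition of all the rescalings is genuinely a homeomorphism near the orbit in order to state the location claim rigorously.
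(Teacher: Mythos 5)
Your proposal is correct and takes essentially the same route as the paper: there the theorem is explicitly a summary of the preceding Section 6 computations (reduction of (\ref{6.4}) to the polar system (\ref{6.12}), evaluation of the averaging constant $K$ via Theorem 10 to obtain (\ref{6.13}), and the stability/invariant-annulus dichotomy supplied by Theorem 11). Your additional observations---that the prefactor $8w^{2}(a+\lambda)^{4}/h_{1}$ is positive, that the right-hand side of (\ref{6.13}) is affine in $w$ with positive slope $G_{2}$ so that $K<0$ if and only if $w<G_{1}/G_{2}$, and that the composed changes of variables are homeomorphisms---are exactly the (largely implicit) readings the paper relies on.
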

The following Corollary is obtained as a direct consequence of the previous
theorem.
\begin{corollary}
For $k=a+2\lambda $, $\lambda =\lambda _{1}=\lambda _{2}$ the segment $L$ of
Equilibrium points of (\ref{6.1}) bifurcates into a "cylinder" that is
obtained from the union of the orbits referred to in the previous theorem.
If \ $w<G_{1}/G_{2}$ then the cylinder is an attractor of the system, that
is, there exists a neighborhood of the cylinder such that every trajectory
with initial conditions in this neighborhood tends to the cylinder when t
tends to infinity. Such a neighborhood can be constructed by joining the
annular regions obtained from Theorem 11 considering the respective changes
of variables.
\end{corollary}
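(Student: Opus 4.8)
The plan is to exploit the fact that the positive octant is foliated by the invariant manifolds (\ref{6.3}) and to run the previous theorem leaf by leaf. First I would verify that $c=x_2/x_1^{\rho}$ is a first integral of (\ref{6.2}): since $x_1'/x_1=\beta_1(S-\lambda)/(a+S)$ and $x_2'/x_2=\rho\beta_1(S-\lambda)/(a+S)$,
\[
\frac{d}{dt}\left(\frac{x_2}{x_1^{\rho}}\right)=\frac{x_2'}{x_1^{\rho}}-\rho\,\frac{x_2\,x_1'}{x_1^{\rho+1}}=\rho\beta_1\frac{S-\lambda}{a+S}\,\frac{x_2}{x_1^{\rho}}-\rho\beta_1\frac{S-\lambda}{a+S}\,\frac{x_2}{x_1^{\rho}}=0 .
\]
Consequently every trajectory with $x_1>0$ stays on a single leaf $M_c=\{x_2=c\,x_1^{\rho}\}$, and on $M_c$, parameterized by $(S,x_1)$, the dynamics is exactly the two-dimensional system (\ref{6.4}). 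As $c$ ranges over $[0,\infty)$ the equilibrium $Q_c=(\lambda,\xi_1(c))$ sweeps out the segment $L$, because the intersection point $Q^{\ast}_c=(\lambda,\xi_1,\xi_2)$ with $\xi_2=c\,\xi_1^{\rho}$ and $\xi_1$ determined by (\ref{6.5}) traverses $L$ as $c$ varies.

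Next I would apply the previous theorem leafwise. For $k=a+2\lambda$ each $Q_c$ undergoes the Hopf bifurcation of (\ref{6.4}), producing a periodic orbit $\Gamma_c\subset M_c$ which, when the leafwise constant $K(c)$ computed from (\ref{6.13}) is negative (equivalently $w<G_1/G_2$), is orbitally asymptotically stable and is enclosed by a positively invariant annular region $A_c$ furnished by Theorem 11 and (\ref{5.24}). The cylinder is then $\mathcal{C}=\bigcup_c\Gamma_c$. To see that $\mathcal{C}$ is genuinely a topological cylinder I would invoke the continuous dependence of the coefficients of (\ref{6.7a})--(\ref{6.7b}) on $c$ --- hence of $\xi_1(c)$, of the eigenvalues $\pm i\,w$, and of the full averaging data --- so that $\Gamma_c$ varies continuously, with the two degenerate leaves $c=0$ and $c\to\infty$ (one predator absent) supplying the two bounding circles.

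For the attraction claim I would again use that $c$ is conserved: a trajectory starting near $\mathcal{C}$ lies on a fixed leaf $M_{c_0}$ and, by the orbital asymptotic stability of $\Gamma_{c_0}$ \emph{within} that invariant leaf, converges to $\Gamma_{c_0}\subset\mathcal{C}$ as $t\to\infty$. The attracting neighborhood is obtained by assembling the leafwise rings, $\mathcal{N}=\bigcup_c A_c$; it is positively invariant because each $A_c$ is positively invariant in its own invariant leaf, and every orbit in $\mathcal{N}$ tends to $\mathcal{C}$. This is precisely the neighborhood described in the statement, built by joining the annular regions of Theorem 11 under the respective changes of variables.

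The main obstacle is uniformity in the leaf parameter $c$. Assembling the rings $A_c$ into a genuine three-dimensional neighborhood of $\mathcal{C}$ requires their widths $\varepsilon(\mu)\tilde{\rho}_0$ to be bounded below as $c$ varies, and it requires the stability inequality $w(c)<G_1(c)/G_2(c)$ (that is, $K(c)<0$) to persist along the whole segment, including near the endpoints where $\xi_1(c)$ approaches a boundary value and the rescaling $r=\mu\rho$ degenerates. I would control this by observing that all the quantities entering (\ref{6.13}) are continuous functions of $c$ on the compact parameter set obtained after compactifying the $c$-range by the two boundary leaves, so their extrema are attained and the required bounds hold uniformly. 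The delicate point is exactly these two endpoints, at which the two-predator problem degenerates to a one-predator prey model and the Hopf/averaging data must be shown to extend continuously; once that is established, the leafwise conclusions glue together to give the stated attracting cylinder.
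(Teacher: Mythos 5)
Your proposal follows precisely the argument the paper intends: the paper offers no proof beyond declaring the corollary a ``direct consequence'' of the preceding theorem, with the neighborhood ``constructed by joining the annular regions obtained from Theorem 11 considering the respective changes of variables,'' and that is exactly your leafwise construction --- verify that $c = x_2/x_1^{\rho}$ is a first integral, restrict to each invariant manifold $x_2 = c\,x_1^{\rho}$ to recover the planar system, apply the previous theorem on each leaf, and take the union of the periodic orbits and of their positively invariant rings. If anything you are more careful than the paper: your explicit check that trajectories are confined to leaves, and your observation that gluing the rings into a genuine three-dimensional neighborhood requires uniformity in $c$ (ring widths bounded below and persistence of $K(c)<0$ up to the two degenerate boundary leaves), address points the paper passes over in silence.
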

\section{Conclusions}
The procedure described in this work allows us to locate an annular region
where the orbit of a periodic solution of a nonlinear ordinary differential
equation is located, specifically, it deals with periodic solutions of
autonomous systems (or also periodic systems) that depend on a parameter. It
has been possible to implement a method that improves results that only
exist when obtaining periodic solutions through the Hopf Bifurcation is
used. The case is fully described for the case of two-dimensional equations.
With the Hopf Bifurcation Theorem, the problem of the existence of periodic
solutions in systems of differential equations that depend on a parameter
can be solved. However, with the method presented here, some characteristics
can be quantitatively known. Properties of the solution such as the amplitude,
the period, or the region where it is located, using the method of integral
averages described here in a general way can be known. The important thing
is to be able to show, through the solution of a non-trivial problem, the
steps necessary to obtain, approximately, the amplitude and stability of a
periodic solution that is obtained through the bifurcation of a critical point
of an autonomous equation. The method is applied to a three-dimensional
system of differential equations that models the competition of two
predators and prey, a model that was proposed by Hsu, Hubbell, and Waltman
in \cite{HHW}, obtaining results that improves previous one in \cite{MFarkas}.

\end{document}